\numberwithin{equation}{section}
\newcommand{\be}{\begin{equation}}
\newcommand{\ee}{\end{equation}}
\newcommand{\f}{\frac}
\newcommand{\bxi}{\boldsymbol \xi}
\newtheorem{lemma}{Lemma}[section]
\newtheorem{theorem}{Theorem}[section]
\journal{Journal of Computational and Applied Mathematics}
\begin{document}

\begin{frontmatter}

\title{Anderson Acceleration Based on the $\mathcal{H}^{-s}$ Sobolev Norm for Contractive and Noncontractive Fixed-Point Operators}

\author[address1]{Yunan Yang\corref{mycorrespondingauthor}}
\ead{yy837@cornell.edu}
\cortext[mycorrespondingauthor]{Corresponding author}
\address[address1]{Department of Mathematics, Cornell University, Ithaca, NY 14850.}

\author[address1]{Alex Townsend}
\ead{townsend@cornell.edu}

\author[address3]{Daniel Appel\"{o}}
\ead{appeloda@msu.edu}
\address[address3]{Department of Computational Mathematics, Science, and Engineering, and Department of Mathematics, Michigan State University, East Lansing, MI 48824.}

\begin{abstract}
Anderson acceleration (AA) is a technique for accelerating the convergence of fixed-point iterations. In this paper, we apply AA to a sequence of functions and modify the norm in its internal optimization problem to the $\mathcal{H}^{-s}$ norm, for some positive integer $s$, to bias it towards low-frequency spectral content in the residual. We analyze the convergence of AA by quantifying its improvement over Picard iteration. We find that AA based on the $\mathcal{H}^{-2}$ norm is well-suited to solve fixed-point operators derived from second-order elliptic differential operators, including the Helmholtz equation.
\end{abstract}

\begin{keyword}
Anderson acceleration \sep fixed-point iteration \sep Sobolev space \sep iterative methods \sep optimization \sep Helmholtz equation.\\

\MSC 65B99 \sep \MSC 65F10 \sep \MSC 65N15 \sep \MSC 65F08 \sep \MSC 65K10 \sep \MSC 46E39.

\end{keyword}

\end{frontmatter}


\section{Introduction} \label{sec:intro}
Anderson acceleration (AA) or Anderson mixing is an acceleration method for fixed-point iterations. Given a continuous operator $G: \mathcal{X}\rightarrow \mathcal{X}$, where $\mathcal{X}\subseteq L^2(\Omega)$ is a Hilbert space and $\Omega \subseteq\mathbb{R}^n$, a basic method for finding a fixed-point of $G$, i.e., $x = G(x)$, is Picard iteration
\be \label{eq:Picard}
 x_{k+1} = G(x_k), \quad k\geq 1, \qquad x_0 \in \mathcal{X} \text{ given}.
\ee
AA can be used to speed up the convergence of $x_0,x_1,\ldots,$ to a fixed-point of $G$, or even calculate a fixed point when the Picard iterates diverge~\cite[Thm.~4.1]{pollock2019anderson}.  While Picard iteration only uses the current iterate to calculate the next one, $x_{k+1}$ in AA is a weighted sum of the previous $\min(k,m)+1$ iterates and residuals, where $m$ is a memory parameter. The weighted sum is chosen so that it minimizes a linearized residual~\cite[(4.16)]{anderson1965iterative} in the next iteration. 

The application of AA includes flow problems~\cite{pollock2018anderson}, solving nonlinear radiation-diffusion equations~\cite{an2017anderson}, and accelerating certain optimization algorithms~\cite{peng2018anderson,fu2019anderson,mai2019anderson,li2018fast}. 
It is closely related to Pulay mixing~\cite{pulay1980convergence} and DIIS (direct inversion on the iterative subspace)~\cite{kudin2002black,rohwedder2011analysis}, which are prominent methods in self-consistent field theory (SCFT)~\cite{Ceniceros:2004uq,PhysRevLett.83.4317}. AA is also becoming popular in the numerical analysis community~\cite{walker2011anderson,toth2015convergence,evans2018proof,zhang2018globally,pollock2019anderson}.  AA is related to many other iterative and acceleration methods. When $m = 0$, AA collapses to Picard iteration and when $m=\infty$, AA is essentially equivalent to GMRES (for Generalized Minimal RESidual) when the fixed-point operator is linear~\cite{walker2011anderson}. For any $m$, AA can be viewed as a multisecant quasi-Newton method~\cite{fang2009two,lin2013elliptic} and is also related to traditional series acceleration methods~\cite{weniger2001nonlinear}. 

As in Anderson's paper~\cite{anderson1965iterative}, we primarily regard AA as an iteration performed on functions. As an acceleration method on functions, calculating the weighted sum to minimize the linearized residual involves an optimization problem that is often posed in the $L^2$ norm. The $L^2$ norm is convenient because the optimization problem is then a continuous least-squares problem. For practical computations, AA on functions must be discretized so that functions become vectors, and the $L^2$ norm becomes the discrete $\ell^2$ norm. After discretization, the optimization problem becomes a classical least-squares problem, which can be solved using fast rank-updated QR factorizations~\cite[Sec.~6.5.1]{golub1996matrix}. 

We follow a casual suggestion by Anderson~\cite[p.~554]{anderson1965iterative} and minimize the linearized residual at each iteration in a norm other than the $L^2$ norm. In particular, we seek further acceleration for fixed-point iterations involving second-order elliptic differential operators by selecting a $\mathcal{H}^{-s}$ norm (see Section~\ref{sec:new_aa}). After discretization, any $\mathcal{H}^{-s}$ norm becomes a weighted least-squares norm. In certain situations, this can provide an implicit spectral bias to counterbalance the spectral biasing from a fixed-point operator. Other researchers have been motivated to modify the norm in AA based on the contraction properties, as opposed to spectral biasing, of $G$~\cite{pollock2018anderson}.

AA based on the $\mathcal{H}^{-s}$ norm is equivalent to a multisecant method in a weighted Frobenius norm (see Section~\ref{sec:multisecant}). We use this viewpoint to analyze its convergence behavior by comparing it to AA based on the $L^2$ norm and Picard iteration (see Section~\ref{sec:H-1-AA}). The improvement depends on the particular properties of the fixed-point operator. We present some analysis motivating the choice of the norm (see Theorem~\ref{thm:WeightedAAError}) as well as providing numerical experiments to demonstrate the benefit (see Section~\ref{sec:experiments}). 

The paper is structured as follows. In Section~\ref{sec:aa}, we provide background details on the AA method, introduce AA based on the $\mathcal{H}^{-s}$ norm, and show how it can be discretized. In Section~\ref{sec:analysis}, we give a detailed analysis of the error reduction achieved by performing one step of the AA iteration (see Theorem~\ref{thm:ErrorOneStepAA}).  In Section~\ref{sec:experiments}, we provide numerical experiments, including both contractive and noncontractive fixed-point operators for the 1D Poisson equation and linear and non-linear Helmholtz equations. Concluding remarks can be found in Section~\ref{sec:Conclusions}.

\section{Anderson acceleration} \label{sec:aa}
The convergence of Picard iteration in~\eqref{eq:Picard} is only guaranteed when certain assumptions hold on $G$ as well as the initial iterate $x_0$, and even then, its convergence is typically linear~\cite[Chap.~4.2]{kelley1995iterative}. To promote faster convergence, AA computes $x_{k+1}$ using the previous $\min(k,m) + 1$ iterates and residuals. The original form of AA is given in Algorithm~\ref{alg:AA1}, and the main step is to take a linear combination of the past $\min(k,m) + 1$ iterates to minimize a linearized residual of the form $G(x_i) - x_i$. 

\begin{algorithm}[ht!]
  \caption{The basic AA technique with the $L^2$ norm\label{alg:AA1}}
\begin{algorithmic}
\STATE {\bf Input:} Given $x_0\in\mathcal{X}$, mixing parameters $0\leq \beta_k\leq 1$, and memory parameter $m \ge 1$, this algorithm computes a sequence $x_0,x_1,\ldots,$ intended to converge to a fixed-point of $G:\mathcal{X}\rightarrow \mathcal{X}$. 
\FOR{$k = 0, 1, \ldots$ until convergence}
       \STATE $m_k = \min(m,k)$.
    \STATE Compute $F_k = (f_{k-m_k},\ldots,f_k)$, where $f_i = G(x_i)-x_i$. 
\STATE Solve 
\[
\alpha^{(k)} = {\rm arg}\!\!\!\!\!\!\!\!\!\!\!\!\min_{v\in\mathbb{C}^{m_k+1}, \sum_{i=0}^{m_k} v_i = 1} \|F_k v\|_{L^2}, \qquad \alpha^{(k)} = (\alpha^{(k)}_0,\ldots,\alpha^{(k)}_{m_k})^T. 
\]
\STATE Set
$$
x_{k+1} = (1-\beta_k) \sum_{i=0}^{m_k} \alpha_i^{(k)} x_{k-m_k+i}
 + \beta_k \sum_{i=0}^{m_k} \alpha_i^{(k)} G(x_{k-m_k+i}). 
$$ 
\ENDFOR
    \end{algorithmic}
\end{algorithm}

\begin{algorithm}[ht!]
  \caption{A reformulated AA technique for functions\label{alg:AA2}}
  \begin{algorithmic}
  \STATE {\bf Input:} Given $x_0\in\mathcal{X}$, memory parameter $m \ge 1$, and measure of distance $d : \mathcal{X}\times \mathcal{X} \rightarrow [0,\infty)$, this algorithm computes a sequence $x_0,x_1,\ldots,$ intended to converge to a fixed-point of $G:\mathcal{X}\rightarrow \mathcal{X}$.
  \STATE Set $x_1 = G(x_0)$.
\FOR{$k = 1, 2, \ldots$ until convergence}
       \STATE $m_k = \min(m,k)$.
    \STATE Set $D_k = (\Delta f_{k-m_k},\ldots,\Delta f_{k-1})$, where $\Delta f_i = f_{i+1}-f_i$ and $f_i$ = $G(x_i) - x_i$.
\STATE Solve 
\begin{equation}
\gamma^{(k)} = {\rm arg}\!\!\min_{v\in\mathbb{C}^{m_k}} d( f_k, D_k v ), \qquad \gamma^{(k)} = (\gamma^{(k)}_0,\ldots,\gamma^{(k)}_{m_k-1})^T. \label{eq:distance1}
\end{equation}
\STATE Set
\begin{equation}\label{eq:AA2_update}
x_{k+1}  = G(x_k) -  \sum_{i=0}^{m_k-1}  \gamma_i^{(k)} \left[G(x_{k-m_k+i+1}) - G(x_{k-m_k+i}) \right]. 
\end{equation}
\ENDFOR
    \end{algorithmic}
\end{algorithm}

The mixing parameters $\beta_k$ at iteration $k$ indicates how to combine the previous $m_k + 1$ iterates and residuals. The usual choice is to select $\beta_k=\beta$ for $k\geq 0$. AA with mixing parameter $\beta_k = \beta$ is the same as applying AA with $\beta_k = 1$ to the map $G_\beta(x) = (1-\beta)x + \beta G(x)$~\cite[p.~256]{kelley2018numerical}. Therefore, throughout this paper we use $\beta=1$. 

In Algorithm~\ref{alg:AA1}, the coefficient vector $\alpha^{(k)}$ is determined by a constrained optimization problem. To remove the constraint, and gain additional insight, one can set~\cite{walker2011anderson}
\[
\gamma_i^{(k)} = \alpha_0^{(k)} + \cdots + \alpha_i^{(k)}, \qquad 0\leq i\leq m_k-1. 
\]
By carefully rewriting Algorithm~\ref{alg:AA1} with $\beta_k = 1$ in terms of $\gamma_i^{(k)}$ for $0\leq i\leq m_k-1$, we obtain Algorithm~\ref{alg:AA2}. In this version of the algorithm, the coefficient vector $\gamma^{(k)}$ is determined by an unconstrained optimization problem, which can be computationally more convenient.

In Algorithm~\ref{alg:AA2}, one has a choice on $d: \mathcal{X}\times \mathcal{X} \rightarrow [0,\infty)$, which can be chosen as any distance function. A standard choice is to take $d(f,g) = \|f-g\|_{L^2(\mathbb{R}^n)}$ so that~\eqref{eq:distance1} can be efficiently solved. To see this, note that~\eqref{eq:distance1} becomes $\gamma^{(k)} = {\rm arg}\!\min_{v\in\mathbb{C}^{m_k}} \|f_k - D_k v\|_{L^2(\mathbb{R}^n)}$. This means that, when $D_k$ has linearly independent columns, we have
\begin{equation} 
\gamma^{(k)} = (D_k^* D_k)^{-1}D_k^*f_k, \qquad (D_k^* D_k)_{ij} = \langle \Delta f_i,\Delta f_j \rangle, \quad (D_k^*f_k)_i = \langle\Delta f_i, f_k \rangle,
\label{eq:ContinuousL2} 
\end{equation} 
where $\langle\cdot,\cdot\rangle$ is the standard $L^2$ inner-product. Moreover,~\eqref{eq:ContinuousL2} can be efficiently solved by a fast rank-updated QR factorization of quasimatrices~\cite{trefethen2009householder}.   

When $m$ is finite, AA is distinct from restarted GMRES in that it gradually phases out old residuals in favor of new ones while GMRES completely discards the history of the iterates every $m$ iterations. We demonstrate, by numerical experiments with the Helmholtz equation, that the gradual replacement strategy used by AA exhibits better convergence properties than restarted GMRES (see Section~\ref{sec:whi}). 

The computational efficiency and convergence rate of AA is affected by the distance function.
The majority of the literature focuses on the convergence of AA for vectors under the discrete $\ell^2$ norm (Euclidean distance). In this setting, AA is known to have superlinear convergence~\cite{toth2015convergence,pollock2018anderson} when accelerating fixed-point contraction operators. AA can also converge for sequences from noncontractive fixed-point operators~\cite{pollock2019anderson}.

\subsection{The Hilbert space $\mathcal{H}^{-s}$ and its norm} \label{sec:new_aa}
We select the distance function in Algorithm~\ref{alg:AA2} to be $d(f,g) = \|f-g\|_{\mathcal{H}^{-s}}$ for some positive integer $s$. We observe that this can speed up the convergence of AA for fixed-point operators defined via second-order elliptic differential operators (see Section~\ref{sec:experiments}).

One can define $\mathcal{H}^{-s}(\mathbb{R}^n)$, for any real number $s$, as
\[
\mathcal{H}^{-s}(\mathbb{R}^n) = \left\{f\in \mathcal{S}'(\mathbb{R}^n) :\mathcal{F}^{-1}\left[ (1+|\bxi|^2)^{-s/2}\mathcal{F}f\right]\in L^2(\mathbb{R}^n)\right\}, 
\]
where $\mathcal{F}$ denotes the Fourier transform on $\mathbb{R}^n$ and $\mathcal{S}'(\mathbb{R}^n)$ is the space of tempered distributions~\cite{leoni2017first}. The Hilbert space $\mathcal{H}^{-s}(\mathbb{R}^n)$ can be equipped with the norm 
\begin{equation} 
\|f\|_{\mathcal{H}^{-s}(\mathbb{R}^n)} = \left\|\mathcal{F}^{-1}\left[(1+|\bxi|^2)^{-s/2}\mathcal{F}f\right]\right\|_{L^2(\mathbb{R}^n)}, \qquad f\in\mathcal{H}^{-s}(\mathbb{R}^n). 
\label{eq:HilbertSpaceNorm}
\end{equation} 
It turns out that the solution to~\eqref{eq:distance1} with $d(f,g) = \|f-g\|_{\mathcal{H}^{-s}(\mathbb{R}^n)}$ can be expressed using a weighted projection formula. To see this, note that~\eqref{eq:distance1} becomes
\[
\gamma^{(k)} = {\rm arg}\!\!\min_{v\in\mathbb{C}^{m_k}}\left\| f_k-v\right\|_{\mathcal{H}^{-s}(\mathbb{R}^n)} = {\rm arg}\!\!\min_{v\in\mathbb{C}^{m_k}}\left\| \mathcal{P}(f_k-v)\right\|_{L^2(\mathbb{R}^n)}, \qquad \mathcal{P}f = \mathcal{F}^{-1}\left[(1+|\bxi|^2)^{-s/2}\mathcal{F}f\right]
\]
and hence, when the columns of $D_k$ are linearly independent, we have
\begin{equation} 
\gamma^{(k)} = \left(D_k^* \mathcal{P}^2  D_k\right)^{-1} D_k^*\mathcal{P}^2 f_k.
\label{eq:HsSolution} 
\end{equation} 
Here, we have $(D_k^* \mathcal{P}^2  D_k)_{ij} = \langle\mathcal{P}\Delta f_{k-m_k+i},\mathcal{P}\Delta f_{k-m_k+j} \rangle$ for $0\leq i,j\leq m_k-1$ and $(D_k^*\mathcal{P}^2 f_k)_i = \langle \mathcal{P}\Delta f_{k-m_k+i}, \mathcal{P}f_k \rangle$ for $0\leq i\leq m_k-1$.

The Hilbert space $\mathcal{H}^{-s}(\Omega)$ for a bounded Lipschitz-smooth domain $\Omega \subseteq\mathbb{R}^n$ is the set of restrictions of functions from $\mathcal{H}^{-s}(\mathbb{R}^n)$ equipped with the norm 
\[
\|f\|_{\mathcal{H}^{-s}(\Omega)} = \inf\left\{\|g\|_{\mathcal{H}^{-s}(\mathbb{R}^n)} : g\in\mathcal{H}^{-s}(\mathbb{R}^n), g|_\Omega = f\right\}.
\]
An equivalent and more explicit definition of $\|f\|_{\mathcal{H}^{-s}(\Omega)}$ is given via the Laplacian operator~\cite[p.~586]{schechter1960negative}. That is, when $\Omega$ is a bounded domain with infinitely differentiable boundary, we have
\begin{equation} 
\|f\|_{\mathcal{H}^{-s}(\Omega)} =  \|u\|_{\mathcal{H}^{s}(\Omega)}, 
\label{eq:Hm1norm}
\end{equation} 
where $u$ is the solution to $(\sum_{r=0}^s(-1)^r\nabla^{2r}) u=f$ with $u$ satisfying zero Neumann boundary conditions. 

We can begin to appreciate the purpose of the $\mathcal H^{-s}$ norm from~\eqref{eq:HilbertSpaceNorm} and~\eqref{eq:HsSolution}. The norm $\|f\|_{\mathcal{H}^{-s}}$ weights the low-frequency spectral content of $f$ more than the high-frequency content. Thus, $\gamma^{(k)}$ is focused on making the low-frequency spectral content of the residual smaller, which can potentially counterbalancing the spectral biasing of a fixed-point operator.  One can select any type of frequency biasing---towards the low- or high-frequency spectral content of the residual---by choosing $d(f,g) = \|f-g\|_{\mathcal H^{-s}}$ for $s\in\mathbb{R}$. We suspect that a good choice of $s$ depends on the spectral biasing of the fixed-point operator (see Section~\ref{sec:example1}). While we focus on the benefits of the $\mathcal H^{-s}$ norm, the idea of acceleration through changing the distance function is more general. One can select the distance function a priori or even modify it dynamically as the iteration proceeds. 

The choice of distance function in AA is similar to preconditioning in an iterative method. The work in this paper began with the idea that integration reformulation~\cite{driscoll2010automatic,greengard1991spectral} or integral preconditioning~\cite{hesthaven1998integration} might be a way to speed up the convergence of AA for certain fixed-point operators. We suspect that there are possible connections of our work to operator preconditioning~\cite{hiptmair2006operator}, continuous Krylov methods~\cite{gilles2019continuous}, and Riesz operators~\cite{malek2014preconditioning}.


\subsection{Discretizing Anderson acceleration based on the $\mathcal{H}^{-s}$ norm}\label{sec:AAdiscretized}
One must first discretize Algorithm~\ref{alg:AA2} before running it on a computer. In principle, any reasonable discretization scheme is appropriate. In this paper, we discretize functions and operators with finite difference schemes so that the iterates $x_k\in\mathcal{X}$ in Algorithm~\ref{alg:AA2} are replaced by vectors that sample $x_k$ at equispaced points. 

The most subtle quantity to discretize in Algorithm~\ref{alg:AA2} is $d(f,g)$. 
For example, if $\Omega = (0,1)$, then a discrete analogue of $d(f,g) = \|f-g\|_{\mathcal{H}^{-1}(\Omega)}$ is given by
\begin{equation}
d(v, w) = \sqrt{h}\left\|(I_n - B_n)^{-1/2}(v-w)\right\|_2, \qquad B_n = \frac{1}{h^2}\begin{bmatrix}-1 & 1 \cr 1 & -2 & \ddots \cr & \ddots & \ddots & 1\cr & & 1 & -2 & 1\cr & & & 1 & -1 \end{bmatrix}, \label{eq:Hm1Matrix}
\end{equation}
where $h = 1/(n-1)$. Here, $I_n$ is the $n \times n$ identity matrix and $B_n$ is the $n\times n$ second-order finite difference matrix for the Laplacian with zero Neumann conditions. 
From~\eqref{eq:Hm1norm}, we know that $(d(f,g))^2 = \|u\|_{\mathcal{H}^1(\Omega)}^2 = \langle u, u\rangle + \langle \nabla u, \nabla u\rangle = \langle u, u-\nabla^2 u\rangle$. Therefore, we take the discrete analogue as $d(v,w) = \sqrt{h}\|(I_n-B_n)^{1/2} (I_n-B_n)^{-1}(v-w)\|_2 = \sqrt{h}\|(I_n-B_n)^{-1/2} (v-w)\|_2$. Here, the integral in the definition of the $L^2$ norm is discretized by a low-order Riemann-like sum.

We have selected the so-called half-sample discretization for $u'(0)=0$ and $u'(1)=0$ so that the matrix is symmetric~\cite[Sec.~3]{strang1999discrete}. For $d(f,g) = \|f-g\|_{\mathcal{H}^{-2}(\Omega)}$, similar to~\eqref{eq:Hm1Matrix}, we take the discretization as
\[
d(v, w) = \sqrt{h}\|(I_n-B_n+B_n^2)^{-1/2}(v-w)\|_2.
\]
Therefore, once AA in an $\mathcal{H}^{-s}$ norm is discretized for a nonzero $s$, it becomes AA in a weighted $\ell^2$ norm. 

\subsection{The connection with the multisecant method}\label{sec:multisecant} 
A useful interpretation of AA for vectors is as a multisecant method~\cite{fang2009two,pratapa2016anderson}. In particular, when $d(v,w) = \|v-w\|_2$, the update in~\eqref{eq:AA2_update} can be expressed in the following form: 
\[
x_{k+1} = x_{k}  + (I-S_k) f_k,
\]
where $S_k \in \mathbb{C}^{n\times n}$, $x_{k}$ is the $k$th iterate from AA, and $f_k=G(x_k)-x_k$. It is shown in~\cite{fang2009two} that if one defines $\Delta x_i = x_{i+1}-x_i$, $\Delta f_i = f_{i+1}-f_i$, and
\begin{equation} 
X_k = \begin{bmatrix} \Delta x_{k-m_k},\ldots,\Delta x_{k-1}\end{bmatrix}\in\mathbb{C}^{n\times m_k},\quad D_k = \begin{bmatrix}\Delta f_{k-m_k},\ldots,\Delta f_{k-1}\end{bmatrix}\in\mathbb{C}^{n\times m_k}, 
\label{eq:XkDk}
\end{equation} 
then $S_k$ is the solution to the following constrained optimization problem:
\begin{equation} 
\min_{S_k\in\mathbb{C}^{n\times n}} \|S_k\|_{F}, \quad \text{subject to} \quad S_k D_k = X_k+D_k.
\label{eq:OptProb}
\end{equation} 
Here, $\|\,\cdot\,\|_F$ denotes the matrix Frobenius norm, i.e., $\|S_k\|_F^2 = \sum_{i,j=1}^n|(S_k)_{ij}|^2$. Furthermore,~\eqref{eq:OptProb} has an explicit solution given by 
\[
S_k = \left(X_k + D_k\right)\left(D_k^* D_k\right)^{-1}D_k^*,
\]
when the columns of $D_k$ are linearly independent~\cite{fang2009two,lin2013elliptic}.

The $\mathcal H^{-s}$ norm can be discretized to a weighted $\ell^2$ norm when $s$ is an integer, i.e., $d(v,w) = \|P(v-w)\|_2$, where $P$ is a symmetric positive definite matrix. For example, $P =(I_n-B_n)^{-1/2}$ for $\mathcal H^{-1}$ norm and $P=(I_n-B_n+B_n^2)^{-1/2}$ for $\mathcal H^{-2}$ norm. Similar to the $\ell^2$ norm, one can also write AA based on the $\mathcal H^{-s}$ norm as a multisecant method. From a weighted projection formula, we find that 
\[
x_{k+1} = x_{k}  + (I - \widetilde S_k) f_k,
\]
where $\widetilde S_k$ solves the following constrained optimization problem
\begin{equation} 
\min_{\widetilde S_k} ||P  \widetilde  S_k   P^{-1}||_F, \quad \text{subject to}\quad \widetilde S_k  D_k  =  X_k + D_k.
\label{eq:secant2}
\end{equation} 
Furthermore,~\eqref{eq:secant2} has an explicit solution given by 
\begin{equation} \label{eq:tilde_Sk}
\widetilde S_k = \left( X_k + D_k\right)\left( D_k^*P^2 D_k\right)^{-1}D_k^* P^2,
\end{equation}
which is derived in Lemma~\ref{lem:tilde_Sk} below. 

\begin{lemma} \label{lem:tilde_Sk}
Let $P$ be a positive definite matrix and let $D_k,X_k\in\mathbb{C}^{n\times m_k}$ such that $D_k$ has linearly independent columns. Then, the solution to
\[
\min_{S_k} ||S_k||_{F}, \quad \text{subject to}\quad S_k PD_k = PX_k + PD_k,
\]
is $S_k = P \widetilde S_k P^{-1}$, where $\widetilde S_k = \left( X_k + D_k\right)\left( D_k^*P^2 D_k\right)^{-1}D_k^* P^2$ is the solution to~\eqref{eq:secant2}.
\end{lemma}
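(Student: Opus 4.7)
The plan is to reduce the lemma to the classical minimum-Frobenius-norm solution for an underdetermined matrix equation of the form $SA = B$, and then substitute $A = PD_k$ and $B = P(X_k + D_k)$ and simplify.

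First I would recall the standard fact from matrix analysis: if $A \in \mathbb{C}^{n \times m}$ has linearly independent columns and $B \in \mathbb{C}^{n \times m}$, then the unique minimizer of $\|S\|_F$ over $S \in \mathbb{C}^{n \times n}$ subject to $SA = B$ is given by $S = B A^+ = B (A^* A)^{-1} A^*$, where $A^+$ denotes the Moore-Penrose pseudoinverse of $A$. This is proved by noting that any feasible $S$ differs from $B(A^*A)^{-1}A^*$ by a matrix whose rows lie in the left null space of $A$, which is orthogonal (in the row-wise Frobenius inner product) to $B(A^*A)^{-1}A^*$, whose rows lie in the column space of $A$.

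Next I would apply this with $A = PD_k$ and $B = P(X_k + D_k)$. Since $P$ is positive definite and $D_k$ has linearly independent columns, $PD_k$ also has linearly independent columns, so the hypothesis of the classical fact is satisfied. Using $P^* = P$, the pseudoinverse computes to
\[
(PD_k)^+ = \bigl((PD_k)^* (PD_k)\bigr)^{-1} (PD_k)^* = (D_k^* P^2 D_k)^{-1} D_k^* P,
\]
so the optimal $S_k$ is
\[
S_k = P(X_k + D_k) (D_k^* P^2 D_k)^{-1} D_k^* P.
\]

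Finally I would check that this matches $P \widetilde S_k P^{-1}$ by direct substitution of the stated formula for $\widetilde S_k$:
\[
P \widetilde S_k P^{-1} = P(X_k + D_k)(D_k^* P^2 D_k)^{-1} D_k^* P^2 \cdot P^{-1} = P(X_k + D_k)(D_k^* P^2 D_k)^{-1} D_k^* P,
\]
which coincides with the expression above. There is no real obstacle here; the main content is recognizing the problem as the classical minimum-norm problem and keeping the Hermitian and invertibility properties of $P$ straight during the algebra. The only place where care is needed is ensuring that $D_k^* P^2 D_k$ is invertible, but this follows from $P^2$ being positive definite and $D_k$ having full column rank.
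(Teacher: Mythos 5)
Your proposal is correct and follows essentially the same route as the paper: both reduce the problem to the classical minimum-Frobenius-norm solution of $SA=B$ for full-column-rank $A$ and then substitute $A = PD_k$, $B = P(X_k+D_k)$, using $P^*=P$ to simplify. The only difference is that you include a short orthogonality argument for the classical fact and verify the algebra matching $P\widetilde S_k P^{-1}$ explicitly, whereas the paper cites the formula and leaves the substitution implicit.
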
 
\begin{proof} 
The solution to 
\[
\min_{S'_k} ||S'_k||_{F}, \quad \text{subject to}\quad S'_k D'_k = X'_k + D'_k,
\]
is given by $S'_k = \left(X'_k + D'_k\right)\left((D'_k)^* D'_k\right)^{-1}(D'_k)^*$~\cite{lin2013elliptic}. The statement of the lemma follows by setting $D'_k = P D_k$ and $X'_k = P X_k$.
\end{proof} 

The interpretation of AA based on the $\mathcal{H}^{-s}$ norm as a multisecant method is useful for understanding its convergence behavior in Section~\ref{sec:analysis}. 

\section{Error analysis of Anderson acceleration}\label{sec:analysis}
In this section, we analyze AA based on the $\mathcal{H}^{-2}$ norm. Suppose that we have the following fixed-point iteration
\[
x_{k+1} = G(x_k) = Ax_k+b, \qquad k\geq 0,
\]
where $A$ is an $n \times n$ real symmetric matrix and $b$ is an $n\times 1$ vector. Let $x^*$ denote a fixed-point, i.e., $G(x^*) = x^*$. The dependence between the error $e_k = x_k - x^*$ and the residual $f_k = G(x_k) - x_k$ in any two consecutive iterates can be written as
\[
e_{k+1}= A e_k,\qquad f_{k+1} = Af_k, \qquad k\geq 0.
\]
Since $A$ is a real symmetric matrix, it has an orthogonal eigendecomposition given by 
\[
A = W \Lambda W^*, \qquad \Lambda = {\rm diag}(\lambda_1,\ldots,\lambda_n),
\]
where $W$ is an orthogonal matrix. After $k$ Picard iterations, $e_{k} = A^{k}e_0 = W\Lambda^{k} W^*e_0$ for $k\geq 0$, where $e_0 = x_0 - x^*$ is the initial error. There is an extensive literature on the convergence of AA~\cite{toth2015convergence,pollock2019anderson,pollock2018anderson}, but an explicit convergence rate that depends on $m$ is missing. It is nontrivial to derive an explicit convergence rate as a function of $m$ since in every iteration the weights in AA are derived from an optimization problem. 

In this paper, we are particularly interested in the relation between the convergence of AA and the memory parameter as well as the choice of distance function in~\eqref{eq:distance1}. The setting of our analysis is the following. We first apply Picard iteration for $k$ iterations (where $k\geq m$), and then we perform one step of the AA algorithm with memory parameter $m$ to obtain the $(k+1)$th iterate. We analyze the solution error $e_{k+1}$ after one step of AA and compare it to the solution error after $k+1$ Picard iterations. We refer to this as the \textit{one-step} analysis of AA. The improvement in the solution error is the \textit{one-step} acceleration of AA.

Alternating between Picard and AA is proposed as the alternating Anderson--Jacobi method in~\cite{pratapa2016anderson}. Although it is convenient for the analysis, we do not advocate using an alternating scheme in practice since, in our experience, applying AA at every iteration usually has an improved convergence behavior.

\subsection{Error analysis of Anderson acceleration in the $\ell^2$ norm}\label{sec:L2-AA}
In Section~\ref{sec:multisecant}, AA is viewed as a multisecant-type method with iterates defined as
\begin{equation} \label{eq:Sk}
x_{k+1} = x_k + (I-S_k)f_k, \qquad S_k = (D_k+X_k)(D_k^*D_k)^{-1}D_k^*,
\end{equation}
where the matrices $D_k$ and $X_k$ are defined in~\eqref{eq:XkDk} and $f_k = G(x_k) - x_k$. We note that Picard iteration takes $S_k$ to be the zero matrix, and we hope that~\eqref{eq:Sk} promotes faster convergence to a fixed-point of $G$. The error and residual between any two consecutive iterates satisfy the following recurrence:
\begin{equation} \label{eq:err_res_AA}
e_{k+1}= \left( I - (I-S_k)(I-A)\right) e_k,\quad f_{k+1} = \left( I - (I-A)(I-S_k) \right) f_k.
\end{equation} 

The matrix $S_k$ in~\eqref{eq:err_res_AA} depends on the fixed-point operator as well as the previous $m_k$ iterates and residuals. Therefore, we find it difficult to imagine a full and explicit convergence analysis of AA for general $m$. Instead, we analyze the acceleration effect when one runs Picard iteration for the first $k$ iterations (where $k\geq m$) and then performs one step of AA to obtain the $(k+1)$th iterate. We start our error analysis by expressing the solution error explicitly in terms of a Krylov matrix. For an $n\times n$ matrix $A$ and an $n\times 1$ vector $b$, a Krylov matrix is defined as 
\[
K_m(A,b) = \begin{bmatrix}b&Ab&\cdots&A^{m-1}b\end{bmatrix}\in\mathbb{C}^{n\times m}.
\]

We have the following lemma that relates the error after doing Picard for $k+1$ steps, denoted by $Ae_{k}$, to the error after doing Picard for $k$ steps and then one step of AA with memory parameter $m$, denoted by $e_{k+1}$. 
\begin{lemma}~\label{lem:OneStepAA}
Let $A$ be an $n\times n$ real symmetric matrix with eigenvalue decomposition $A = W\Lambda W^*$ and $b$ be an $n\times 1$ vector.  Suppose that $x_{j+1} = Ax_j + b$ for $0\leq j\leq k-1$, $0$ and $1$ are not eigenvalues of $A$, and that $x_{k+1}$ is produced from AA based on the discrete $\ell^2$ norm with memory parameter $1\leq m\leq n$. Then, when $K_H$ has linearly independent columns, we have
\be \label{eq:ek}
e_{k+1} = W E W^* A e_k, \qquad E = D_{\mu} \left(I - K_H\left(K_H^*K_H\right)^{-1}K_H^*\right)D_{\mu}^{-1},
\ee
where $D_{\mu} = \Lambda(\Lambda-I)^{-1}$, $K_H = K_m(\Lambda,HW^*e_0)$, and $H = (\Lambda-I)^2\Lambda^{k-m}$. 
\end{lemma}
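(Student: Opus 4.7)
My starting point is the multisecant form of AA from Section~\ref{sec:multisecant}: one step of AA gives
\[
e_{k+1} = (I-(I-S_k)(I-A))e_k = Ae_k - S_k f_k,
\]
where I have used $f_k = (A-I)e_k$. Since $A$ is symmetric with $A=W\Lambda W^*$, my strategy is to evaluate $S_k$, $D_k$, $X_k$, and $f_k$ in closed form under the assumption that the first $k$ iterates come from Picard, then diagonalize everything by multiplying by $W^*$ on the left.

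The first concrete step is to exploit the Picard iteration to write, for $0\le j\le k-1$,
\[
\Delta x_j = A^{j}(A-I)e_0, \qquad \Delta f_j = A^{j}(A-I)^2 e_0, \qquad f_k=(A-I)A^k e_0,
\]
which follows from $e_{j+1}=Ae_j$ and $f_j=(A-I)e_j$. Plugging in $A=W\Lambda W^*$ and letting $\tilde e_0 = W^*e_0$, the columns of $D_k$ become $W(\Lambda-I)^2\Lambda^{j}\tilde e_0$ for $j=k-m,\ldots,k-1$. Recognizing this as a Krylov sequence with starting vector $(\Lambda-I)^2\Lambda^{k-m}\tilde e_0 = HW^*e_0$ immediately gives $D_k = W K_H$ with $K_H = K_m(\Lambda,HW^*e_0)$, so that $D_k^*D_k = K_H^*K_H$.

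Next I need to express $X_k+D_k$ in the same basis. The algebraic identity $(\Lambda-I)+(\Lambda-I)^2 = \Lambda(\Lambda-I) = D_\mu(\Lambda-I)^2$, where $D_\mu = \Lambda(\Lambda-I)^{-1}$, is the critical observation: it lets me factor the extra power of $\Lambda$ onto the left and match the remaining factor with $K_H$. This yields $X_k+D_k = W D_\mu K_H$, and hence
\[
S_k = (X_k+D_k)(D_k^*D_k)^{-1}D_k^* = W D_\mu K_H(K_H^*K_H)^{-1}K_H^*W^*.
\]

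Finally I assemble the pieces. Using $f_k = W(\Lambda-I)\Lambda^k\tilde e_0$ and the observation $(\Lambda-I)\Lambda^k = D_\mu^{-1}\Lambda^{k+1}$ (valid since $0$ is not an eigenvalue, so $D_\mu^{-1}=(\Lambda-I)\Lambda^{-1}$ exists, and $1$ is not an eigenvalue, so $D_\mu$ exists), I compute
\[
W^*(Ae_k - S_k f_k) = \Lambda^{k+1}\tilde e_0 - D_\mu K_H(K_H^*K_H)^{-1}K_H^* D_\mu^{-1}\Lambda^{k+1}\tilde e_0 = E\,W^*Ae_k,
\]
with $E$ as in~\eqref{eq:ek}, which proves the claim after multiplying by $W$ on the left. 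The main obstacle I anticipate is spotting the factorization $\Lambda(\Lambda-I)=D_\mu(\Lambda-I)^2$ that cleanly separates $X_k+D_k$ into a left factor $D_\mu$ times $K_H$; once that is in hand, the rest is bookkeeping. A minor point requiring care is verifying that the hypotheses that $0$ and $1$ are not eigenvalues of $A$ are exactly what is needed to make $D_\mu$, $D_\mu^{-1}$, and $K_H^*K_H$ all well defined.
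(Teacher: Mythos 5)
Your proof is correct and takes essentially the same route as the paper: compute $D_k$ and $X_k$ in closed form from the Picard iterates, recognize $D_k = WK_H$ and $X_k+D_k = WD_\mu K_H$ via $(\Lambda-I)+(\Lambda-I)^2=\Lambda(\Lambda-I)$, substitute into the multisecant formula for $S_k$, and simplify $e_{k+1}=Ae_k-S_kf_k$ in the eigenbasis. (One tiny caveat: invertibility of $K_H^*K_H$ requires $K_H$ to have full column rank, which depends on $e_0$ and the spectrum's multiplicities, not only on $0,1$ not being eigenvalues of $A$ --- but the paper makes the same implicit assumption, so this is not a gap relative to its proof.)
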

\begin{proof} 
Note that $e_{j+1} - e_j = (A^{j+1}-A^j)e_0 = W\Lambda^j(\Lambda - I)W^*e_0$ and $f_{j+1} - f_j = (A-I)(e_{j+1} - e_j)$ for $0\leq j\leq k-1$. Thus, for $D_k$ and $X_k$ given in~\eqref{eq:XkDk}, we have
\begin{equation} 
D_k = W\Lambda^{k-m}(\Lambda - I)^2 K_m(\Lambda, W^*e_0)
\label{eq:Dk}
\end{equation} 
and $X_k = W\Lambda^{k-m}(\Lambda - I)K_m(\Lambda, W^*e_0)$. By substituting~\eqref{eq:Dk} into~\eqref{eq:Sk}, we find that 
\begin{equation} 
S_k = W\Lambda(\Lambda-I)^{-1}K_H\left(K_H^*K_H\right)^{-1}K_H^*, \qquad K_H = K_m(\Lambda,HW^*e_0),
\label{eq:FullSk}
\end{equation} 
where $H = (\Lambda-I)^2\Lambda^{k-m}$. Furthermore, by substituting~\eqref{eq:FullSk} into~\eqref{eq:err_res_AA}, we obtain the following: 
\[
e_{k+1} = W\Lambda(\Lambda-I)^{-1}\left(I - K_H\left(K_H^*K_H\right)^{-1}K_H^*\right)(\Lambda - I) W^* e_k. 
\]
The result follows by noting that $W^*e_k = W^*A^{-1}Ae_k = \Lambda^{-1} W^* A e_k$. 
\end{proof} 

The main observation from Lemma~\ref{lem:OneStepAA} is that $E$ is a projection matrix. Furthermore, $D_{\mu}^{-1}ED_{\mu}$ is an \textit{orthogonal} projection onto the space spanned by the column space of the Krylov matrix $K_H$. Since $H$ is known explicitly, one can precisely quantify the difference between $\|D_{\mu}^{-1}W^*Ae_{k}\|_2$ and $\|D_{\mu}^{-1}W^*e_{k+1}\|_2$. 

\begin{theorem}~\label{thm:ErrorOneStepAA}
Under the same setup, notation, and assumptions of Lemma~\ref{lem:OneStepAA}, suppose that the eigenvalues of $A$ are contained in an interval $[a,b]$ that does not contain $0$ or $1$. Then, 
\[
\|D_{\mu}^{-1} W^* e_{k+1}\|_2 \leq C(a,b,m)  \|D_{\mu}^{-1}W^*Ae_k\|_2, \qquad C(a,b,m) = \left| T_m\left(\frac{2ab-a-b}{b-a}\right)\right|^{-1},
\]
where $T_m(x)$ is the Chebyshev polynomial of degree $m$.
\end{theorem}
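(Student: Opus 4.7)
The plan is to unwrap Lemma~\ref{lem:OneStepAA} into a polynomial approximation problem in $1/\lambda$ and then invoke the classical Chebyshev minimax bound on $[1/b,1/a]$. First I would note that $D_\mu^{-1}ED_\mu = I - K_H(K_H^*K_H)^{-1}K_H^* =: P$ is the orthogonal projector onto $\mathrm{range}(K_H)^{\perp}$, so the statement reduces to proving $\|Pv\|_2 \le C(a,b,m)\|v\|_2$ with $v := D_\mu^{-1}W^*Ae_k$. Using $D_\mu = \Lambda(\Lambda-I)^{-1}$, $W^*e_k = \Lambda^k y$, and $y := W^*e_0$, short computations give $v = (\Lambda-I)\Lambda^k y$, $\xi := HW^*e_0 = (\Lambda-I)^2\Lambda^{k-m}y$, and $\mathrm{range}(K_H) = \{p(\Lambda)\xi : p \in \mathbb{P}_{m-1}\}$.

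Next, by the best-approximation property, $\|Pv\|_2 \le \|v - p(\Lambda)\xi\|_2$ for any $p \in \mathbb{P}_{m-1}$. A direct factorization yields
\[
v - p(\Lambda)\xi \;=\; \bigl[\Lambda^m - (\Lambda-I)p(\Lambda)\bigr](\Lambda-I)\Lambda^{k-m}y \;=\; r(\Lambda)\Lambda^{-m}v,
\]
where $r(\lambda) := \lambda^m - (\lambda-1)p(\lambda)$, and the map $p \mapsto r$ is a bijection from $\mathbb{P}_{m-1}$ onto $\{r \in \mathbb{P}_m : r(1)=1\}$ (the inverse is well defined because $r(\lambda) - \lambda^m$ vanishes at $1$ and is therefore divisible by $\lambda - 1$).

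The key step is the reversed-polynomial substitution $\tilde s(\mu) := \mu^m r(1/\mu) \in \mathbb{P}_m$, which satisfies $\tilde s(1) = 1$ and $r(\Lambda)\Lambda^{-m} = \tilde s(\Lambda^{-1})$ as diagonal operators. Bounding entrywise and enlarging to the interval $[1/b,1/a]$ (well defined and avoiding $1$ by the hypotheses on $[a,b]$) yields
\[
\|Pv\|_2 \;\le\; \min_{\tilde s\in\mathbb{P}_m,\,\tilde s(1)=1}\;\max_{\mu\in[1/b,1/a]}|\tilde s(\mu)|\cdot\|v\|_2.
\]
The inner min-max is the classical Chebyshev extremal problem; its value equals $1/|T_m(L)|$ where the affine map sending $[1/b,1/a]$ to $[-1,1]$ sends $\mu = 1$ to $L = (2 - 1/a - 1/b)/(1/a - 1/b) = (2ab - a - b)/(b - a)$, which is precisely $C(a,b,m)$.

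The main obstacle is the algebraic bookkeeping of the factorization $v - p(\Lambda)\xi = r(\Lambda)\Lambda^{-m}v$ and the verification that the reversal $r \mapsto \tilde s$ is a bijection onto the normalized class $\{\tilde s\in\mathbb{P}_m:\tilde s(1)=1\}$; the remaining Chebyshev step is textbook. The three sign configurations for $[a,b]$ (contained in $(-\infty,0)$, $(0,1)$, or $(1,\infty)$) all yield the same formula thanks to the symmetry $|T_m(-x)| = |T_m(x)|$.
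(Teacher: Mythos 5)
Your proposal is correct and follows essentially the same route as the paper's proof: both reduce, via Lemma~\ref{lem:OneStepAA} and the orthogonal-projection/best-approximation property of the Krylov space spanned by $K_H$, to the normalized polynomial minimax problem $\min_{q\in\mathcal{P}_m,\,q(1)=1}\max_{\mu\in[1/b,1/a]}|q(\mu)|$ (the paper performs the substitution $x\mapsto 1/x$ inside the scalar minimax, while you package it as the residual polynomial $r(\lambda)=\lambda^m-(\lambda-1)p(\lambda)$ and its reversal), and both finish with the classical Chebyshev extremal bound, yielding the same constant $C(a,b,m)=|T_m((2ab-a-b)/(b-a))|^{-1}$.
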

\begin{proof} 
From Lemma~\ref{lem:OneStepAA} and since $K_H$ is a Krylov matrix, we find that 
\[
\begin{aligned} 
\|D_{\mu}^{-1} W^* e_{k+1}\|_2 
& = \| (I - K_H\left(K_H^*K_H\right)^{-1}K_H^*) D_{\mu}^{-1}W^*Ae_k\|_2 \\
& = \min_{c\in\mathbb{C}^m} \| D_{\mu}^{-1}W^*Ae_k - K_Hc \|_2\\
& = \min_{p\in\mathcal{P}_{m-1}} \| D_{\mu}^{-1}W^*Ae_k - p( \Lambda )HW^*e_0 \|_2,
\end{aligned} 
\]
where $\mathcal{P}_{m-1}$ is the space of polynomials of degree $\leq m -1$. 
Since $H = (\Lambda - I)^2\Lambda^{k-m}$ and $D_{\mu}^{-1} = (\Lambda-I)\Lambda^{-1}$, we know that $HW^*e_0 = \Lambda^{k-m}(\Lambda - I)^2W^*e_0 = \Lambda^{-m}(\Lambda - I) D_{\mu}^{-1}W^*Ae_k$. Therefore, we find that 
\[
\begin{aligned} 
\|D_{\mu}^{-1} W^* e_{k+1}\|_2 
&\leq \min_{p\in\mathcal{P}_{m-1}} \| I - p( \Lambda )\Lambda^{-m}(\Lambda-I)\|_2 \|D_{\mu}^{-1}W^*Ae_k \|_2\\
& \leq \min_{p\in\mathcal{P}_{m-1}} \max_{x\in[a,b]} \left| 1 - p( x )x^{-m}(x-1)\right| \|D_{\mu}^{-1}W^*Ae_k \|_2.\\
\end{aligned} 
\]
We note that 
\[
\begin{aligned} 
 \min_{p\in\mathcal{P}_{m-1}} \max_{x\in[a,b]} \left| 1 - p( x )x^{-m}(x-1)\right| 
 & = \min_{p\in\mathcal{P}_{m-1}} \max_{x\in[\tfrac{1}{b},\tfrac{1}{a}]} \left| 1 - p( \tfrac{1}{x} )x^{m-1}(1-x)\right|\\
 & =  \min_{p\in\mathcal{P}_{m-1}} \max_{x\in[\tfrac{1}{b},\tfrac{1}{a}]} \left| 1 - p( x ) (1-x)\right|\\
 & = \min_{q\in\mathcal{P}_{m},q(1) = 1} \max_{x\in[\tfrac{1}{b},\tfrac{1}{a}]} \left| q(x)\right|. 
\end{aligned}
\]
For any $|x_*|>1$, we know that $|T_m(x_*)|\geq |p(x_*)|$ for any polynomial $p$ of degree $\leq m$ such that $|p(x)|\leq 1$ for $x\in[-1,1]$, where $T_m(x)$ is the Chebyshev polynomial of degree $m$~\cite{trefethen1997numerical}. Therefore, since $1\not\in [1/b,1/a]$, we have
\[
 \min_{q\in\mathcal{P}_{m},q(1) = 1} \max_{x\in[\tfrac{1}{b},\tfrac{1}{a}]} \left| q(x)\right| = \max_{x\in[\tfrac{1}{b},\tfrac{1}{a}]}\left|\frac{T_m\left(\frac{2(x-\tfrac{1}{b})}{\tfrac{1}{a}-\tfrac{1}{b}} - 1\right)}{T_m\left(\frac{2(1-\tfrac{1}{b})}{\tfrac{1}{a}-\tfrac{1}{b}} - 1\right)}\right|\leq \left|T_m\left(\frac{2(1-\tfrac{1}{b})}{\tfrac{1}{a}-\tfrac{1}{b}} - 1\right)\right|^{-1},
\]
where in the last inequality we used the fact that $|T_m(x)|\leq 1$ for $x\in[-1,1]$. The result now follows as $2(1-1/b)/(1/a-1/b)-1 = (2ab - a - b)/(b-a)$. 
\end{proof}

The vector given by $Ae_k$ is the solution error after $k+1$ Picard iterations, while $e_{k+1}$ is the solution error after $k$ Picard iterations and then one step of AA. Therefore, Theorem~\ref{thm:ErrorOneStepAA} provides a bound on the acceleration effect by performing one-step AA. If $0<a<b<1$, the weighting matrix $D^{-1}_{\mu} = \Lambda(I-\Lambda)^{-1}$ enforces large weights on components of the error related to eigenvalues that are close to 1. Therefore, this particular weighting suggests that one-step AA is improving precisely the components of the residual that are making Picard iteration converge slowly. 

The number $C(a,b,m)$ in Theorem~\ref{thm:ErrorOneStepAA} only depends on $a$, $b$, and $m$, where $[a,b]$ is an interval containing the eigenvalues of the fixed-point iteration matrix and $m$ is the memory parameter in AA. For example, $C(0.3,0.9,10) \leq 0.024$ and $C(2,100,10) \leq 3.84\times 10^{-8}$. For any interval $[a,b]$, not containing $0$ and $1$, the number $|(2ab-a-b)/(b-a)|>1$, and hence $C(a,b,m)$ is a monotonically decreasing function of $m$ (for fixed $a$ and $b$). In fact, as a function of $m$, $C(a,b,m)$ decays exponentially to zero as $m\rightarrow \infty$. 

\subsection{Error analysis of Anderson acceleration in a weighted $\ell^2$ norm}\label{sec:H-1-AA}
One can derive explicit formulas for AA when performed in a weighted $\ell^2$ norm. That is, the distance function in~\eqref{eq:distance1} is $d(v,w) = \|P(v-w)\|_2$ for some positive definite matrix $P$. From~\eqref{eq:tilde_Sk}, we find that
\[
\tilde{S}_k = (X_k + D_k)(D_k^*P^2D_k)^{-1}D_k^*P^2.
\]
Since $D_k = W\Lambda^{k-m}(\Lambda - I)^2 K_m(\Lambda, W^*e_0)$ and $X_k = W\Lambda^{k-m}(\Lambda - I)K_m(\Lambda, W^*e_0)$, we have
\begin{equation} 
\tilde{S}_k = JWK_H(K_H^*W^*P^2WK_H)^{-1}K_H^*W^*P^2,
\label{eq:Hm1_AA}
\end{equation} 
where $H = (\Lambda-I)^2\Lambda^{k-m}$, $J = W\Lambda(\Lambda-I)^{-1}W^*$, and $K_H = K_m(\Lambda,HW^*e_0)$. Equation~\eqref{eq:Hm1_AA} allows us to derive an analogous formula to~\eqref{eq:ek} for AA in a weighted $\ell^2$ norm. 

\begin{lemma}~\label{lem:WeightedAA} 
Under the same setup, notation, and assumptions of Lemma~\ref{lem:OneStepAA}, except that $x_{k+1}$ is produced from AA with $d(v,w) = \|P(v-w)\|_2$ for some positive definite matrix. Then, we have
\[
e_{k+1} = W \tilde{E} W^* Ae_k, \qquad \tilde{E} = D_{\mu}\left[I - K_H(K_H^*W^*P^2WK_H)^{-1}K_H^*W^*P^2 W \right] \! D_{\mu}^{-1}. 
\]
\end{lemma}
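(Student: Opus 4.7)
The plan is to mirror the proof of Lemma~\ref{lem:OneStepAA}, replacing the unweighted multisecant matrix $S_k$ by its weighted analogue $\tilde S_k$ from~\eqref{eq:tilde_Sk}. Much of the structure carries over verbatim: since we perform $k$ Picard steps before the single AA step, the matrices $D_k$, $X_k$, and the Krylov matrix $K_H$ are identical to those appearing in Lemma~\ref{lem:OneStepAA}. The only genuinely new ingredient is how the weight $P$ enters the weighted normal equations.

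First, I would start from the multisecant form of Section~\ref{sec:multisecant}, which gives $x_{k+1} = x_k + (I - \tilde S_k) f_k$. Using $G(x) = Ax + b$ and the fixed-point identity $x^* = Ax^* + b$, we have $f_k = (A-I)e_k$, so
\[
e_{k+1} = A e_k - \tilde S_k (A - I) e_k.
\]
Next, I would reuse the identities derived inside the proof of Lemma~\ref{lem:OneStepAA}, namely $D_k = W(\Lambda - I)^2 \Lambda^{k-m} K_m(\Lambda, W^* e_0)$ and $X_k = W(\Lambda - I)\Lambda^{k-m} K_m(\Lambda, W^* e_0)$. Since $H = (\Lambda - I)^2 \Lambda^{k-m}$ and $\Lambda$ are both diagonal, they commute, so $K_H = K_m(\Lambda, H W^* e_0) = H K_m(\Lambda, W^* e_0)$. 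This gives the compact expressions $D_k = W K_H$ and $X_k + D_k = J W K_H$ with $J = W \Lambda (\Lambda - I)^{-1} W^* = W D_\mu W^*$. Substituting these into the formula $\tilde S_k = (X_k + D_k)(D_k^* P^2 D_k)^{-1} D_k^* P^2$ of~\eqref{eq:tilde_Sk} yields precisely~\eqref{eq:Hm1_AA}.

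Finally, I would combine these ingredients. Using $A = W \Lambda W^*$ and the identity $(\Lambda - I) W^* = D_\mu^{-1} \Lambda W^* = D_\mu^{-1} W^* A$, the product $\tilde S_k (A - I) = J W K_H (K_H^* W^* P^2 W K_H)^{-1} K_H^* W^* P^2 W (\Lambda - I) W^*$ can be rewritten as $W D_\mu K_H (K_H^* W^* P^2 W K_H)^{-1} K_H^* W^* P^2 W D_\mu^{-1} W^* A$. Inserting $I = W D_\mu D_\mu^{-1} W^*$ in front of $A e_k$ in the expression for $e_{k+1}$, the common left factor $W D_\mu$ and common right factor $D_\mu^{-1} W^* A$ can be pulled out, producing $e_{k+1} = W \tilde E W^* A e_k$ with $\tilde E$ exactly as stated.

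The main obstacle is simply bookkeeping: unlike $\Lambda$, $H$, and $D_\mu$, the weight $W^* P^2 W$ is generally not diagonal and does not commute with these factors, so $W$ and $W^*$ must be kept grouped correctly around $P^2$ rather than expecting cancellations inside the central bracket. Once the compact identities $D_k = W K_H$ and $X_k + D_k = W D_\mu W^* \cdot W K_H$ are recognized, however, the derivation reduces to short algebra that specializes to the proof of Lemma~\ref{lem:OneStepAA} when $P = I$.
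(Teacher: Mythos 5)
Your proposal is correct and follows essentially the same route as the paper: the paper derives~\eqref{eq:Hm1_AA} from~\eqref{eq:tilde_Sk} using the same expressions for $D_k$ and $X_k$, and then notes that the rest is identical to the proof of Lemma~\ref{lem:OneStepAA}, which is exactly the algebra you carry out (your identities $D_k = WK_H$, $X_k+D_k = JWK_H$, and $(\Lambda-I)W^* = D_\mu^{-1}W^*A$ all check out). In effect you have simply written out in full the steps the paper leaves implicit.
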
 
\begin{proof} 
The proof is essentially identical to the proof of Lemma~\ref{lem:OneStepAA}.
\end{proof} 

Here, $\tilde{E}$ is a projection matrix and hence we know that $||e_{k+1}||_2 \leq||Ae_{k}||_2$. However, now $D_{\mu}^{-1}\tilde{E}D_{\mu}$ is not an orthogonal projection (unless $P = I$). This makes it very difficult to do the analysis of one-step AA with a weighted $\ell^2$ norm. 

\subsubsection{When $P$ and $A$ share the same eigenvectors}
To make progress here, we make a strong assumption that the fixed-point operator and the matrix $P$ share the same eigenvectors.  That is, we suppose that $A = W\Lambda W^*$ and $P = W\Sigma W^*$ for the same orthogonal matrix $W$.  Under this assumption, we can analyze the acceleration effect of one-step AA when performed with $d(v,w) = \|P(v-w)\|_2$. The following theorem is a generalization of  Theorem~\ref{thm:ErrorOneStepAA}.

\begin{theorem}~\label{thm:WeightedAAError} 
Under the same setup, notation, and assumptions as Lemma~\ref{lem:WeightedAA}, with eigenvalue decompositions $A = W\Lambda W^*$ and $P = W\Sigma W^*$, we have 
\[
\frac{\|\Sigma D_{\mu}^{-1}W^*e_{k+1}\|_2}{\max_{1\leq i\leq n} |\Sigma_{ii}|} \leq C(a,b,m)\|D_{\mu}^{-1}W^*Ae_k\|_2, \qquad C(a,b,m) = \left|T_m\left(\frac{2ab-a-b}{b-a}\right)\right|^{-1}.
\]
\end{theorem}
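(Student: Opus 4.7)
The plan is to mimic the proof of Theorem~\ref{thm:ErrorOneStepAA}, exploiting the key simplification that the shared-eigenvector hypothesis produces in the expression for $\tilde{E}$ given by Lemma~\ref{lem:WeightedAA}. Since $P = W\Sigma W^*$ with the same orthogonal $W$ as in the eigendecomposition of $A$, we immediately get $W^*P^2 W = \Sigma^2$, so
\[
\tilde{E} = D_{\mu}\left[I - K_H(K_H^*\Sigma^2 K_H)^{-1}K_H^*\Sigma^2\right] D_{\mu}^{-1}.
\]
Multiplying the identity $e_{k+1} = W\tilde{E} W^* A e_k$ from the left by $W^*$ and then by $\Sigma D_{\mu}^{-1}$, and setting $y = D_{\mu}^{-1}W^* A e_k$, I find
\[
\Sigma D_{\mu}^{-1}W^* e_{k+1} = \Sigma y - (\Sigma K_H)\bigl((\Sigma K_H)^*(\Sigma K_H)\bigr)^{-1}(\Sigma K_H)^*(\Sigma y),
\]
which is precisely the residual of the \emph{orthogonal} projection of $\Sigma y$ onto the column space of $\Sigma K_H$. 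Hence, as in the proof of Theorem~\ref{thm:ErrorOneStepAA},
\[
\|\Sigma D_{\mu}^{-1}W^* e_{k+1}\|_2 \;=\; \min_{c\in\mathbb{C}^m} \|\Sigma(y - K_H c)\|_2.
\]

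Next I would use the Krylov structure $K_H = K_m(\Lambda, HW^*e_0)$ to rewrite $K_H c = p(\Lambda)HW^*e_0$ for $p\in\mathcal{P}_{m-1}$, and recall (as derived in the proof of Theorem~\ref{thm:ErrorOneStepAA}) that $HW^*e_0 = \Lambda^{-m}(\Lambda - I)y$. Then I would pull $\Sigma$ out using the operator-norm bound $\|\Sigma v\|_2 \le \bigl(\max_i |\Sigma_{ii}|\bigr)\|v\|_2$, yielding
\[
\|\Sigma D_{\mu}^{-1}W^* e_{k+1}\|_2 \;\le\; \max_{1\le i\le n}|\Sigma_{ii}|\,\min_{p\in\mathcal{P}_{m-1}}\bigl\|\bigl(I - p(\Lambda)\Lambda^{-m}(\Lambda-I)\bigr) y\bigr\|_2.
\]
At this point the inner minimization is identical to the one already analyzed in Theorem~\ref{thm:ErrorOneStepAA}: it is dominated by $\min_{q\in\mathcal{P}_m,\,q(1)=1}\max_{x\in[1/b,1/a]}|q(x)|$, which is bounded by $|T_m((2ab-a-b)/(b-a))|^{-1} = C(a,b,m)$ times $\|y\|_2$. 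Dividing through by $\max_i|\Sigma_{ii}|$ gives the claimed inequality.

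The only genuinely new ingredient beyond Theorem~\ref{thm:ErrorOneStepAA} is step one: the shared-eigenvector assumption is precisely what allows the weighted Gram matrix $K_H^*W^*P^2WK_H$ to collapse to $(\Sigma K_H)^*(\Sigma K_H)$, so that the bracketed operator becomes an orthogonal projector with respect to a \emph{standard} $\ell^2$ inner product (on the rotated variables). Without this assumption, the projection $D_{\mu}^{-1}\tilde{E}D_{\mu}$ is oblique, and one cannot simply bound it by the minimum of a polynomial in $\Lambda$ applied to $y$. The main (and only real) cost of the weighting is the uniform factor $\max_i|\Sigma_{ii}|$ that arises when passing $\Sigma$ through the $\ell^2$ norm; the Chebyshev part of the argument transfers unchanged from Theorem~\ref{thm:ErrorOneStepAA}, so I do not anticipate a substantive obstacle.
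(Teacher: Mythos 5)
Your proposal is correct and follows essentially the same route as the paper's proof: the shared-eigenvector assumption collapses $W^*P^2W$ to $\Sigma^2$, the weighted projection is rewritten as a least-squares minimum over the Krylov space, the factor $\max_{1\le i\le n}|\Sigma_{ii}|$ is extracted, and the Chebyshev bound from Theorem~\ref{thm:ErrorOneStepAA} is reused verbatim. The only cosmetic difference is that you justify the weighted-projection identity explicitly via the orthogonal projector onto the column space of $\Sigma K_H$, whereas the paper simply cites the weighted projection formula.
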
 
\begin{proof} 
Since $P = W\Sigma W^*$, that statement of Lemma~\ref{lem:WeightedAA} becomes 
\[
e_{k+1} = W D_{\mu}\left[I - K_H(K_H^*\Sigma^2K_H)^{-1}K_H^*\Sigma^2 \right] \! D_{\mu}^{-1} W^* Ae_k. 
\]
Therefore, we have
\[
\begin{aligned} 
\|\Sigma D_{\mu}^{-1}W^*e_{k+1}\|_2 
& = \|\Sigma(I - K_H(K_H^*\Sigma^2K_H)^{-1}K_H^*\Sigma^2 )D_{\mu}^{-1} W^* Ae_k\|_2\\
& = \min_{c\in\mathbb{C}^m} \|\Sigma (D_{\mu}^{-1} W^* Ae_k - K_Hc)\|_2\\
& = \min_{p\in\mathcal{P}_{m-1}} \| \Sigma (D_{\mu}^{-1}W^*Ae_k - p( \Lambda )HW^*e_0) \|_2,
\end{aligned} 
\]
where the second equality follows from the formula for weighted projection~\cite[Sec.~6.1.1]{golub1996matrix}. Here, $\mathcal{P}_{m-1}$ is the space of polynomials of degree $\leq m-1$.
Since $HW^*e_0 = \Lambda^{-m}(\Lambda - I) D_{\mu}^{-1}W^*Ae_k$ (see the proof of Theorem~\ref{thm:ErrorOneStepAA}), we have 
\[
\begin{aligned} 
\|\Sigma D_{\mu}^{-1}W^*e_{k+1}\|_2 
& \leq  \min_{p\in\mathcal{P}_{m-1}} \| \Sigma (I - p( \Lambda )\Lambda^{-m}(\Lambda - I)) \|_2 \|D_{\mu}^{-1}W^*Ae_k\|_2\\
& \leq  \max_{1\leq i\leq n} |\Sigma_{ii}|\min_{p\in\mathcal{P}_{m-1}} \| (I - p( \Lambda )\Lambda^{-m}(\Lambda - I)) \|_2\|D_{\mu}^{-1}W^*Ae_k\|_2\\
& \leq \max_{1\leq i\leq n} |\Sigma_{ii}| \min_{p\in\mathcal{P}_{m-1}} \max_{x\in[a,b]} \left| (I - p(x )x^{-m}(x - I))\right| \|\Sigma\|_2\|D_{\mu}^{-1}W^*Ae_k\|_2.
\end{aligned} 
\]
The result follows as the polynomial optimization problem is identical to the one in the proof of Theorem~\ref{thm:ErrorOneStepAA}.
\end{proof} 

To get a sense of Theorem~\ref{thm:WeightedAAError}, suppose that $\Sigma_{jj} = 1/j^2$. Then, $\max_{1\leq i\leq n} |\Sigma_{ii}| = 1$ so that the inequalities in Theorem~\ref{thm:ErrorOneStepAA} and Theorem~\ref{thm:WeightedAAError} are almost identical. The only difference is that Theorem~\ref{thm:WeightedAAError} is bounding $\|\Sigma D_{\mu}^{-1}W^*e_{k+1}\|_2$, not $\|D_{\mu}^{-1}W^*e_{k+1}\|_2$. This means that AA in the  weighted $\ell^2$ norm is penalizing the first entry of $D_{\mu}^{-1}W^*e_{k+1}$ more than the last entry. Since $\Sigma D_{\mu}^{-1}W^*e_{k+1}$ contains the term $W^*e_{k+1}$, one can view this as biasing towards certain spectral content of $A$.  

 \begin{figure}
 \centering
 \begin{minipage}{.49\textwidth}
 \includegraphics[width=.9\textwidth]{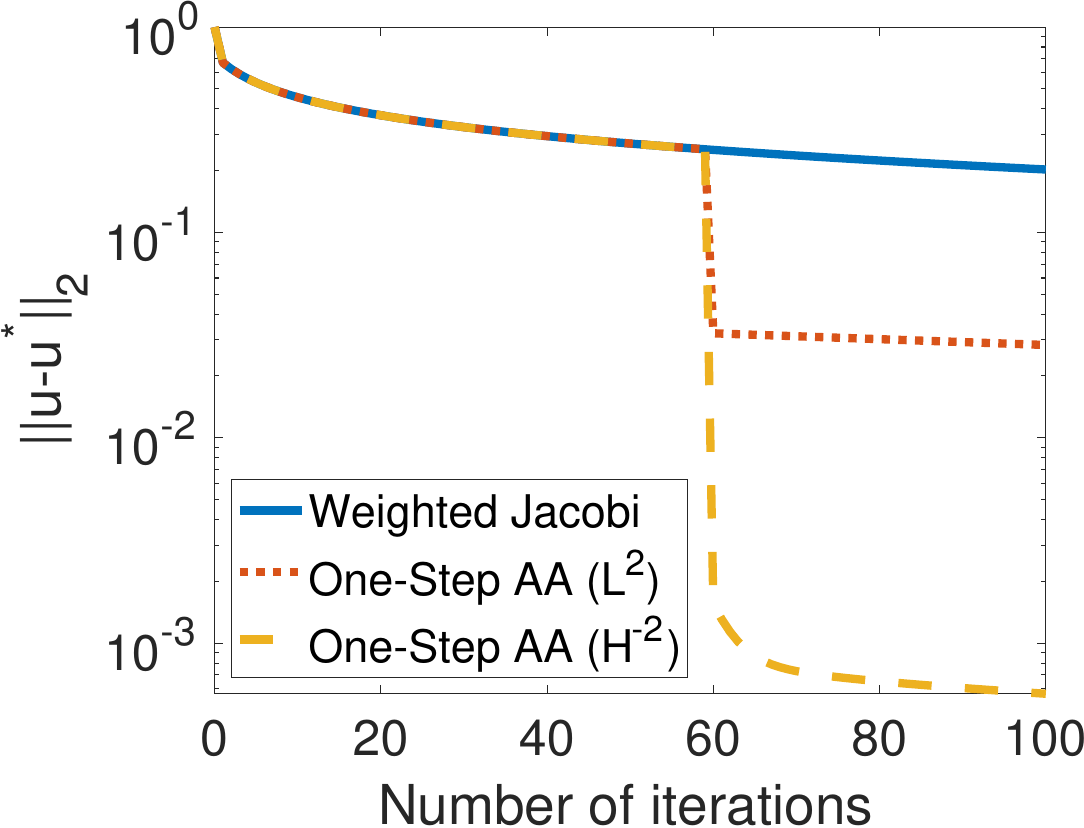}
 \end{minipage}
 \begin{minipage}{.49\textwidth}
 \includegraphics[width=.9\textwidth]{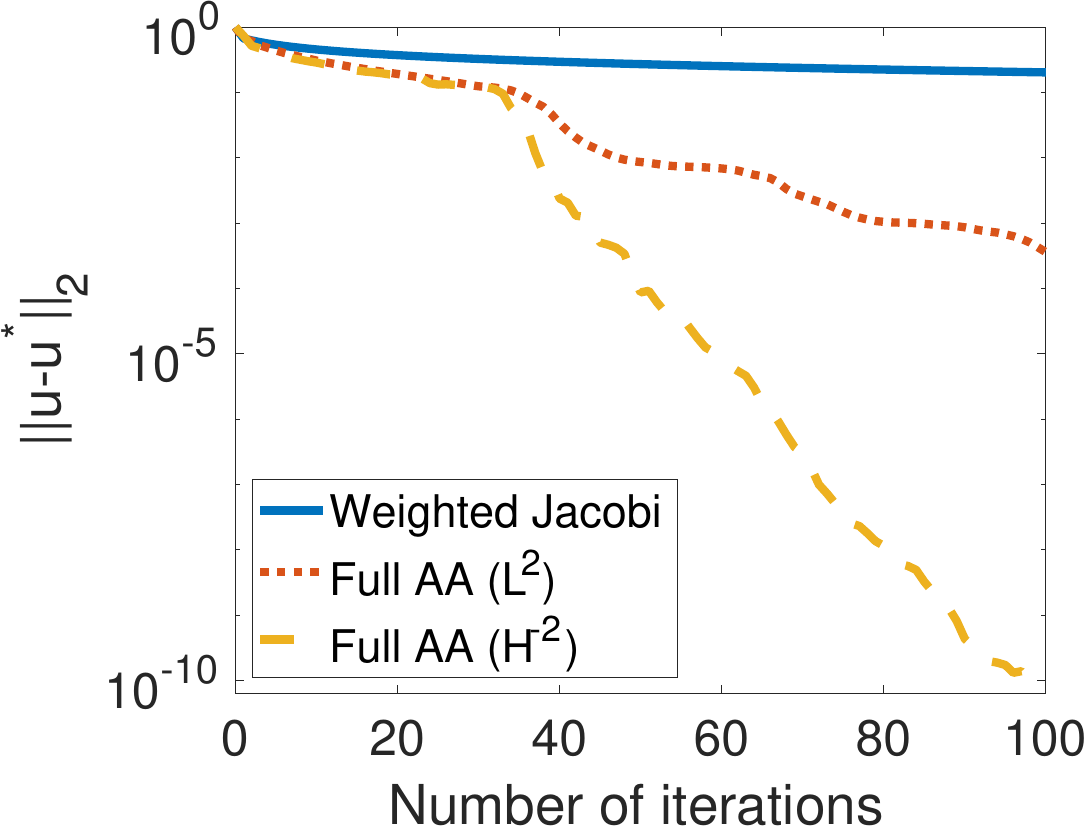}
 \end{minipage}
\caption{Solving Poisson's equation in~\eqref{eq:exp_1D_poisson} with the weighted Jacobi method, AA based on the $L^2$ norm, and AA based on the $\mathcal{H}^{-2}$ norm. Left: The weighted Jacobi method compared to one-step AA. Right: The weighted Jacobi method compared to full AA.}\label{fig:Poisson}
\end{figure}

\section{Numerical experiments} \label{sec:experiments}
In this section, we present numerical experiments to demonstrate the acceleration effects of AA based on the $\mathcal H^{-2}$ norm. We show this with both contractive and noncontractive fixed-point operators that involve second-order differential operators. 

\subsection{A contractive operator for solving Poisson's equation}\label{sec:example1}
Our first example illustrates the theorems in Section~\ref{sec:analysis} and the convergence behavior of AA. For this example, we recommend solving~\eqref{eq:exp_1D_poisson} using direct methods as the linear system is a tridiagonal Toeplitz matrix. We only use this example to illustrate our theorems.

Consider 1D Poisson's equation with zero Dirichlet boundary conditions on $(0,1)$, i.e., 
\begin{equation}~\label{eq:exp_1D_poisson}
-u''(x) = f(x), \qquad u(0) = u(1) = 0. 
\end{equation}
We discretize~\eqref{eq:exp_1D_poisson} by using a second-order finite difference scheme to obtain the $n\times n$ linear system
\begin{equation}
\underbrace{\frac{1}{h^2}\begin{bmatrix}-2 & 1 \cr 1 & \ddots & \ddots \cr & \ddots &\ddots & 1\cr && 1 & -2 \end{bmatrix}}_{=M}\!\! \begin{bmatrix} u_1\\u_2\\\vdots\\u_n \end{bmatrix} = \underbrace{\begin{bmatrix}f(x_1)\\f(x_2)\\\vdots\\f(x_n) \end{bmatrix}}_{=b}, \qquad x_j = jh,\qquad h = \frac{1}{n+1}.  
\label{eq:Tridiagonal} 
\end{equation} 
As our fixed-point iteration, we consider the weighted Jacobi method given by
\[
x_{n+1} = G(x_n) = (I-\frac{2}{3} D_M^{-1}M)x_n + \frac{2}{3}D_M^{-1}b, \qquad D_M = {\rm diag}(M).
\]
Here, $G$ is a contractive operator because $A =I-\frac{2}{3}D_M^{-1}M$ has eigenvalues 
\[
\lambda_j(A) = \frac{1}{3} + \frac{2}{3}\cos\!\left(\frac{j\pi}{n+1}\right),\qquad 1\leq j\leq n,
\]
which satisfy $|\lambda_j(A)|<1$ for $1\leq j\leq n$. For each $j$, the eigenvector corresponding to $\lambda_j(A)$ is also known in closed form as 
\[
(v_j)_i = \sin\!\left(\frac{ij\pi}{n+1}\right), \qquad 1\leq i\leq n, 
\]
and the eigenvector components of the residual corresponding to $\lambda_j(A)$ have size $\mathcal{O}(|\lambda_j(A)|^{k})$ after $k$ iterations~\cite[Chapter 2]{briggs2000multigrid}. If $e_0 = \sum_{j=1}^nc_j v_j$ is the initial error in the weighted Jacobi method, then
\[
e_k = A^ke_0 = \sum_{j=1}^n c_j \lambda_j(A)^k v_j, \qquad f_k = (I-A)e_k = \frac{4}{3} \sum_{j=1}^n  \sin^2\! \left(\frac{j\pi}{2(n+1)}\right) c_j\lambda_j(A)^kv_j.
\]
Thus, one can see a spectral biasing in the weighted Jacobi method: after a few iterations, $\|f_k\|_2$ might be small while $\|e_k\|_2$ is not (due to the $\sin^2((j\pi)/(2(n+1)))$ term). In particular, the eigenvector components associated with large $j$ are more heavily weighted in $f_k$ than in $e_k$. In Fig.~\ref{fig:Poisson}, we show the convergence of the weighted Jacobi method and illustrate its poor convergence. 

A natural idea is to use the choice of distance function in AA to counterbalance the spectral biasing in the weighted Jacobi method. In this case, the $\mathcal{H}^{-2}$ norm is a good choice because the eigenvalues of $P = (I_n-B_n + B_n^2)^{-1/2}$ (see Section~\ref{sec:AAdiscretized}) are given by 
\[
\lambda_j(P) = \left(1 + \frac{4}{h^2}\sin^2\!\left(\frac{\pi(j-1)}{2n}\right) + \frac{16}{h^4}\sin^4\!\left(\frac{\pi(j-1)}{2n}\right)\right)^{-1/2}. 
\]
Therefore, the spectral biasing in the $\mathcal{H}^{-2}$ norm approximately counterbalances the spectral biasing in the residual. This is only heuristic reasoning because the eigenvectors of $P$ and $A$ are not the same. Still, in practice, we observe that AA in the $\mathcal{H}^{-2}$ norm converges rapidly.

In Fig.~\ref{fig:Poisson}, we compare the convergence of the weighted Jacobi method to one-step AA as well as the full AA algorithm. For these tests we use $n = 63$, the memory parameter $m = 10$, and use the initial solution error of  
\[
(e_0)_j = \sum_{i=1}^{20} \sin(2ij\pi),\qquad 1\leq j \leq n.
\] 
As can be seen in the figures, applying AA successively at every iteration is preferred over one-step AA.

\subsection{Noncontractive operator for solving Poisson's equation}
One can also attempt to solve~\eqref{eq:exp_1D_poisson} using Richardson iteration. That is, 
\[
x_{n+1} = G_R(x_n) = (I-M)x_n + b.
\]
Now, the fixed-point operator, $G_R$, is noncontractive as there are eigenvalues of $I-M$ larger than one in absolute value. Generally speaking, the Richardson iteration computes a divergent sequence. Nonetheless, to illustrate the surprising acceleration effects of AA, we repeat the experiment from Section~\ref{sec:example1} with the weighted Jacobi method replaced by Richardson iteration. 
 \begin{figure}
 \centering
 \includegraphics[width=0.45\textwidth]{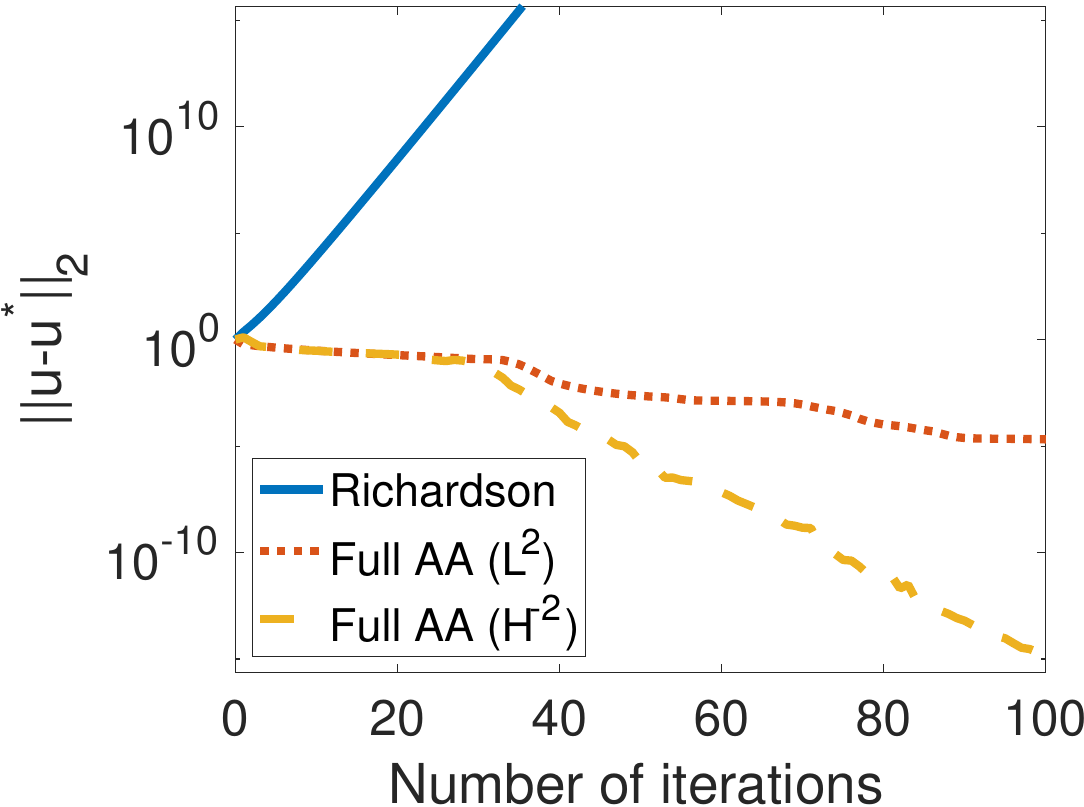}
 \caption{Solving Poisson's equation in~\eqref{eq:exp_1D_poisson} with Richardson iteration, AA based on the $L^2$ norm, and AA based on the $\mathcal{H}^{-2}$ norm. This experiment highlights that AA can be used for noncontractive fixed-point operators.}
 \label{fig:Richardson} 
 \end{figure}

\subsection{Nonlinear Helmholtz equation} 
AA is highly useful for nonlinear fixed-point operators. In this example, we present a nonlinear fixed-point operator designed to find $u:[0,1]\rightarrow \mathbb{C}$, which solves the following 1D nonlinear Helmholtz equation~\cite{pollock2019anderson}
\begin{equation}
    \begin{aligned} 
\dfrac{d^2 u}{dx^2} + k^2_0(1+\epsilon(x) |u|^2) u &=0,\quad 0<x<1,\\
\dfrac{du}{dx} +ik_0 u &=2ik_0,\quad x=0, \\
\dfrac{du}{dx} -ik_0 u &=0,\quad x=1.\label{eq:NHL}
\end{aligned} 
\end{equation} 
The nonlinear Helmholtz equation governs the propagation of linearly-polarized, time-harmonic electromagnetic waves in Kerr-type dielectrics~\cite{baruch2007high}. We set $\epsilon(x)$ to be a piecewise constant function on $[0,1]$, which approximates a realistic
grated Kerr medium~\cite[p.~3]{baruch2007high}: 
\[
  \epsilon(x)=\begin{cases}
               0, &  0\leq x \leq 0.1, \\
               1, &  0.1< x \leq 0.2, \\
               2, &  0.2< x \leq 0.3, \\
               3, & 0.3< x \leq 0.7, \\
               4, & x>0.7.\\
            \end{cases}
\]
In the numerical tests, the system~\eqref{eq:NHL} is discretized by the same second-order finite difference method as described in~\cite[Sec.~6.2]{pollock2019anderson}. The resulting iterative scheme can be seen as a fixed-point operator: $u_{k+1} = G_{\text{NHL}}(u_k)$. Following~\cite{pollock2019anderson}, we set the initial guess to be $u_0 = e^{ik_0x}$, where $k_0$ is the linear wavenumber and $x$ is the discretized interval $[0,1]$ with grid spacing $h = 0.002$.

 \begin{figure}
 \centering
 \includegraphics[width=0.49\textwidth]{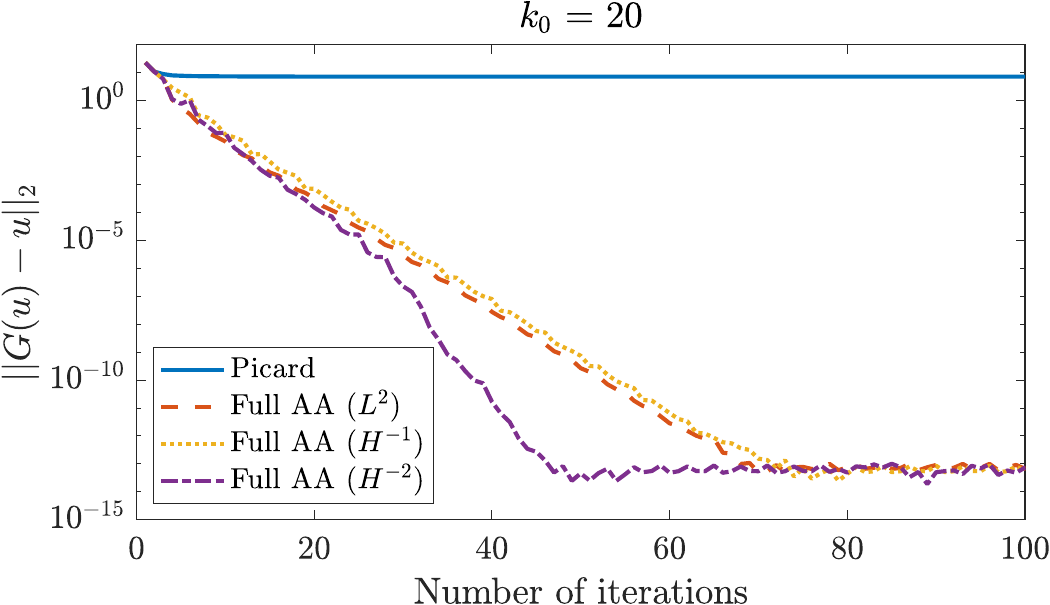}
 \includegraphics[width=0.49\textwidth]{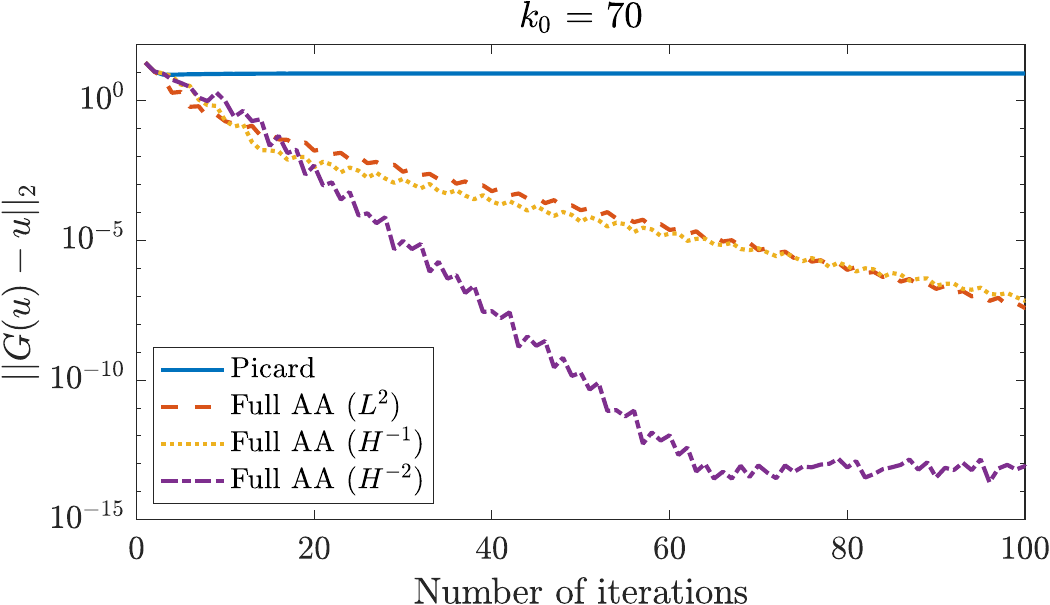}
 \caption{Solving the nonlinear Helmholtz equation by Picard iteration, AA with $m=1$ using the $L^2$ norm, the $\mathcal H^{-1}$ norm, and the $\mathcal H^{-2}$ norm with $k_0 = 20$ (left) and $k_0 = 70$ (right). The residuals are presented for the first $100$ iterations.}\label{fig:NLH} 
 \end{figure} 
 
Fig.~\ref{fig:NLH} shows the numerical results of solving the nonlinear Helmholtz equation with $k_0 = 20$ (left) and $k_0= 70$ (right), respectively. In both cases, the Picard iteration fails to converge, and the residual remains constant for $100$ iterations, while AA in the $L^2$, $\mathcal  H^{-1}$, and $\mathcal  H^{-2}$ norm all decrease the residual rapidly. As the wavenumber $k_0$ increases, the nonlinear Helmholtz problem becomes more challenging, and $\mathcal{H}^{-2}$ norm becomes more beneficial. 
 
 Although~\eqref{eq:NHL} is nonlinear, there is spectral biasing from the second-order spatial derivative.  Thus, one expects that the $\mathcal H^{-2}$ norm could be counterbalancing the spectral bias of the fixed-point operator in~\eqref{eq:NHL}. Nonetheless, the spectral properties might change drastically with different $\epsilon(x)$, so one must be careful. Also, we note that the convergence behavior highly depends on the initial guess $u_0$. We also observe interesting convergence behavior as $m$ increases. In particular, there seems to be essentially no benefit in taking large $m$, which we believe could be related to the nonlinearity in~\eqref{eq:NHL}.

\begin{figure}
\begin{center}
\includegraphics[width=0.30\textwidth]{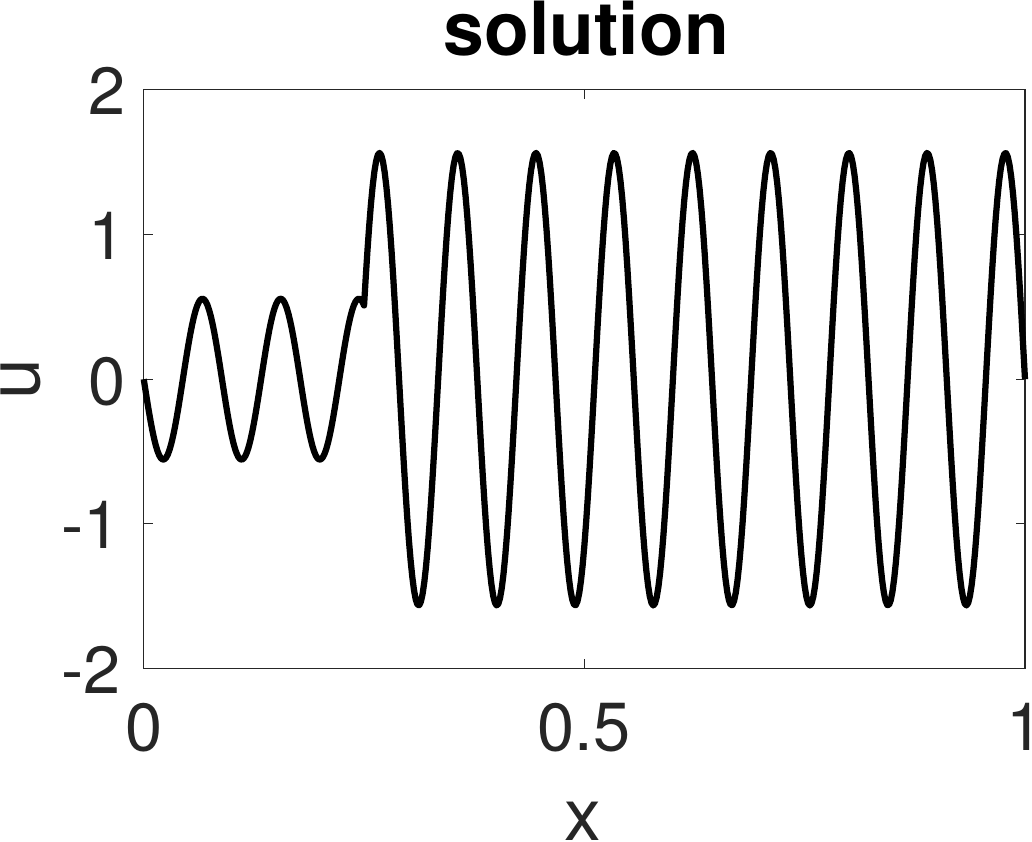}
\includegraphics[width=0.30\textwidth]{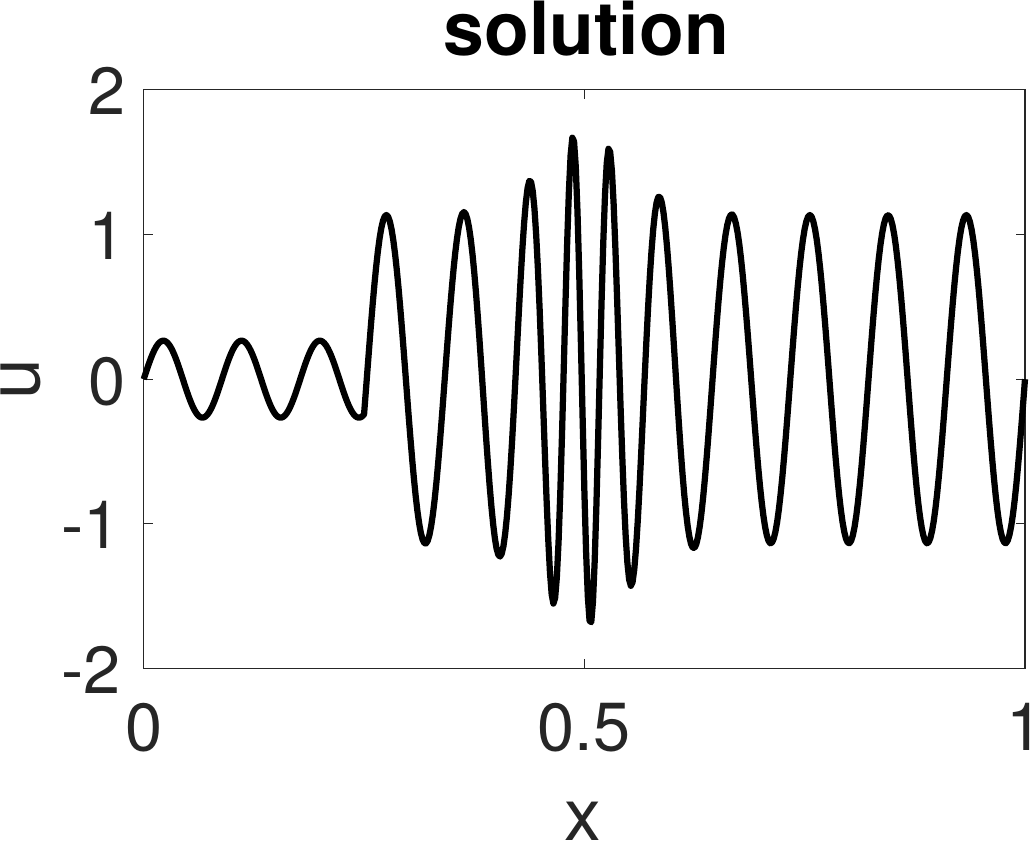}
\includegraphics[width=0.30\textwidth]{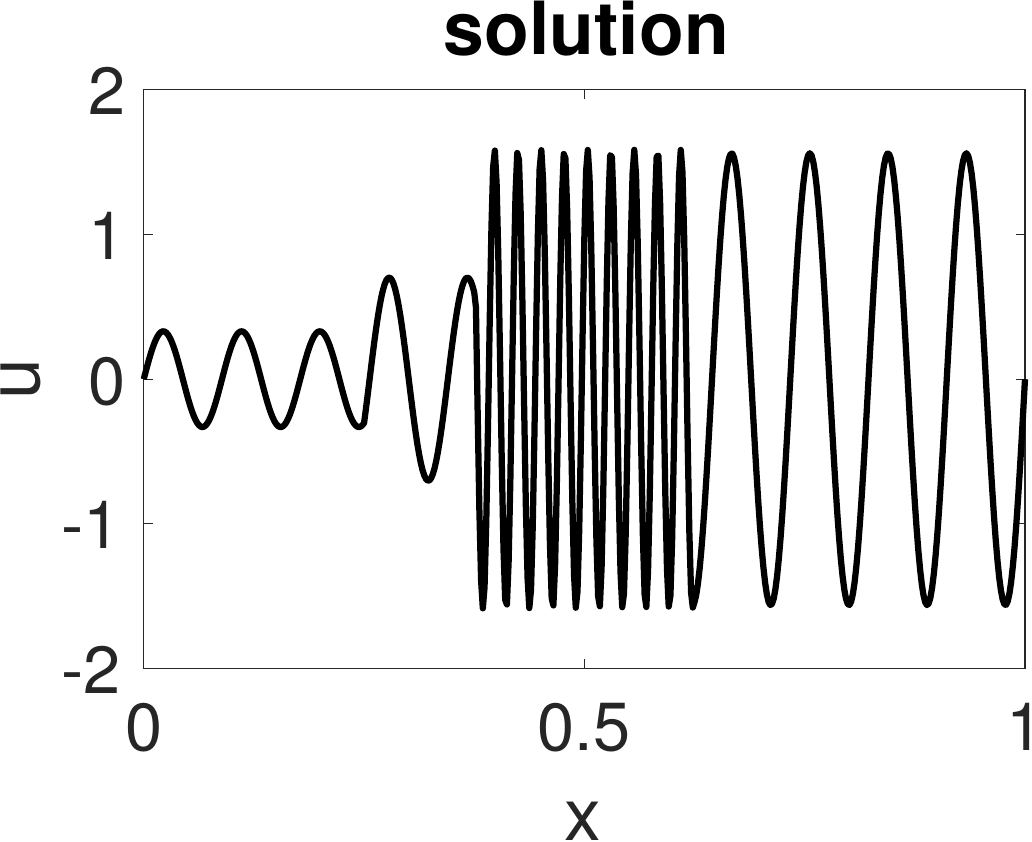}
\caption{Solutions for the three different wave speeds $c_{\rm a}$ (left), $c_{\rm b}$ (middle) and $c_{\rm c}$ (right); see~\eqref{eq:cvalues}. \label{fig:whi2}}
\end{center}
\end{figure}

\subsection{Solving the Helmholtz equation using the WaveHoltz iteration} \label{sec:whi}
In this experiment, we explore AA for the WaveHoltz iteration~\cite{appelo_garcia_runborg_WHI}. In the WaveHoltz iteration, we consider the Helmholtz equation in a bounded open Lipschitz domain $\Omega$, i.e., 
\begin{equation}
  \nabla\cdot(c^2(x)\nabla u) + \omega^2u = f(x),\qquad
  x\in \Omega, \label{eq:helm}
\end{equation}
together with the energy conserving homogeneous Dirichlet boundary conditions. That is, 
\begin{equation}
i\omega u = 0,
\qquad x\in \partial\Omega.
 \label{eq:helmbc}
\end{equation}
As a result, the solution to~\eqref{eq:helm} with the boundary condition~\eqref{eq:helmbc} is a real-valued function. To find a solution to~\eqref{eq:helm}-\eqref{eq:helmbc}, we use the fixed-point iteration given by 
\[
 u_{k+1} = \Pi u_{k}, \qquad u_{0}\equiv 0,
\]
where (see~\cite{appelo_garcia_runborg_WHI})
\begin{equation} 
  \Pi u = \frac{2}{T}\int_0^{T}\left(\cos(\omega t)-\frac14\right)w(t,x) dt,\qquad
  T=\frac{2\pi}{\omega}. 
  \label{eq:projection} 
  \end{equation}
Here, $w(t,x)$ depends on $u(x)$ via the following wave equation:
\begin{equation} 
\begin{aligned} 
w_{tt} - \nabla\cdot(c(x)^2\nabla w)
  & = f(x) \cos(\omega t), \quad x\in\Omega, \ \ 0 \le t \le T,  \\
 w(0,x) &= u(x), \quad w_t(0,x) = 0,\quad \quad x\in\Omega,\\
w(t,x) &= 0, \quad x\in\partial\Omega, \quad 0 \le t \le T.
    \end{aligned} 
        \label{eq:wave}
\end{equation}

First, we take $c(x)$ to be a variable wave speed and $\Omega  = [0,1]$. We discretize the problem with an equispaced grid $x_j = j h$ for $0\leq j\leq n$ and $h = 1/(n+1)$. We approximate the wave equation in~\eqref{eq:wave} using a second-order finite difference scheme in space and time. If one takes $w^k_i \approx w(t_k,x_i)$, then we have the following discretization: 
\begin{eqnarray*}
&& w_i^{-1}  = v^n_i + \frac{\Delta t^2}{2} \left( \frac{(c_{i+1}+c_{i})v^{n}_{i+1}- (c_{i+1}+2c_{i}+c_{i-1})v^{n}_{i}+(c_{i}+c_{i-1})v^{n}_{i-1}}{2h^2} - f(x_i) \right), \ \ \forall i, \\
&& w_i^0 = v^n_i, \ \ \forall i, 
\end{eqnarray*}
\begin{eqnarray*}
\frac{w^{k+1}_i - 2 w^{k}_i+ w^{k-1}_i}{\Delta t^2} &=& 
\frac{(c_{i+1}+c_{i})w^{k}_{i+1}- (c_{i+1}+2c_{i}+c_{i-1})w^{k}_{i}+(c_{i}+c_{i-1})w^{k}_{i-1}}{2h^2} \\
&&- \cos(\omega t_k)f(x_i), \quad  i = 1,\ldots,n-1, \quad k \ge 0. \\
w^{k}_{i} &=& 0, \quad i = 0, \quad i = n, \quad \forall k.
\end{eqnarray*}
Here, the integral in the projection~\eqref{eq:projection} is discretized by the trapezoidal rule. Since this is a linear problem, the discretized solution to the nonlinear Helmholtz equation in~\eqref{eq:helm} can be shown to solve a linear system, i.e., $Au = b$.


Denoting the eigenpairs of the operator $u\mapsto -\nabla\cdot c(x)^2\nabla u$ by $(\lambda_i,\phi_i(x))$, the eigenvalues of the fixed-point iteration satisfy the implicit relationship
\[
\beta(\lambda_i) =  \frac{2}{T}\int_0^{T}\left(\cos(\omega t)-\frac14\right) \cos(\lambda_i t), 
\]
where $\beta(\lambda)$ is a function such that $\beta(\lambda)\in [-1/2,1)$. For the discretized problem one can show that $A$ has eigenvalues in the interval $(0,3/2]$ and shares its eigenvectors with the tridiagonal matrix in~\eqref{eq:Tridiagonal}.   

In our numerical experiments, we use $n = 513$ and set $\omega =  25\sqrt{2}$. The forcing term is zero everywhere, except at the $128$th gridpoint. At the $128$th gridpoint, we set it to be a constant scaled so that the numerical solution is $\approx 2$ in magnitude. The solutions are displayed in Fig.~\ref{fig:whi2}.

\begin{figure}
\begin{center}
\includegraphics[width=0.32\textwidth]{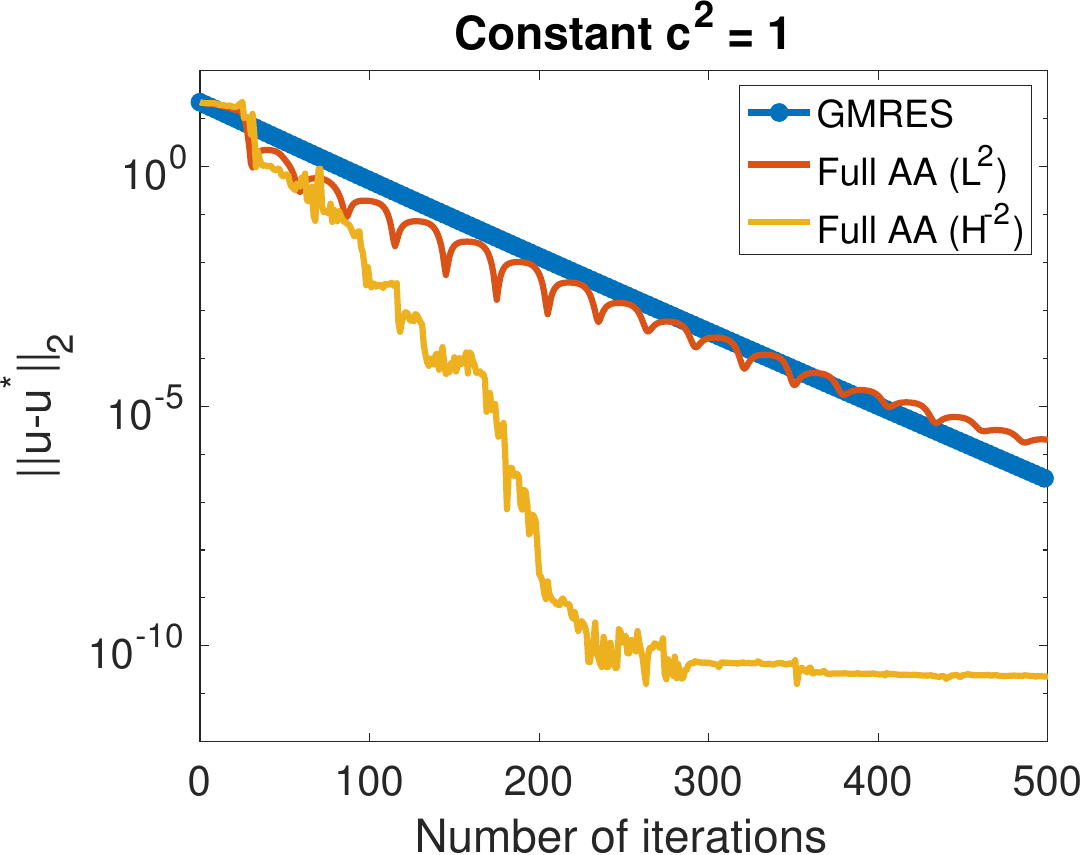}
\includegraphics[width=0.32\textwidth]{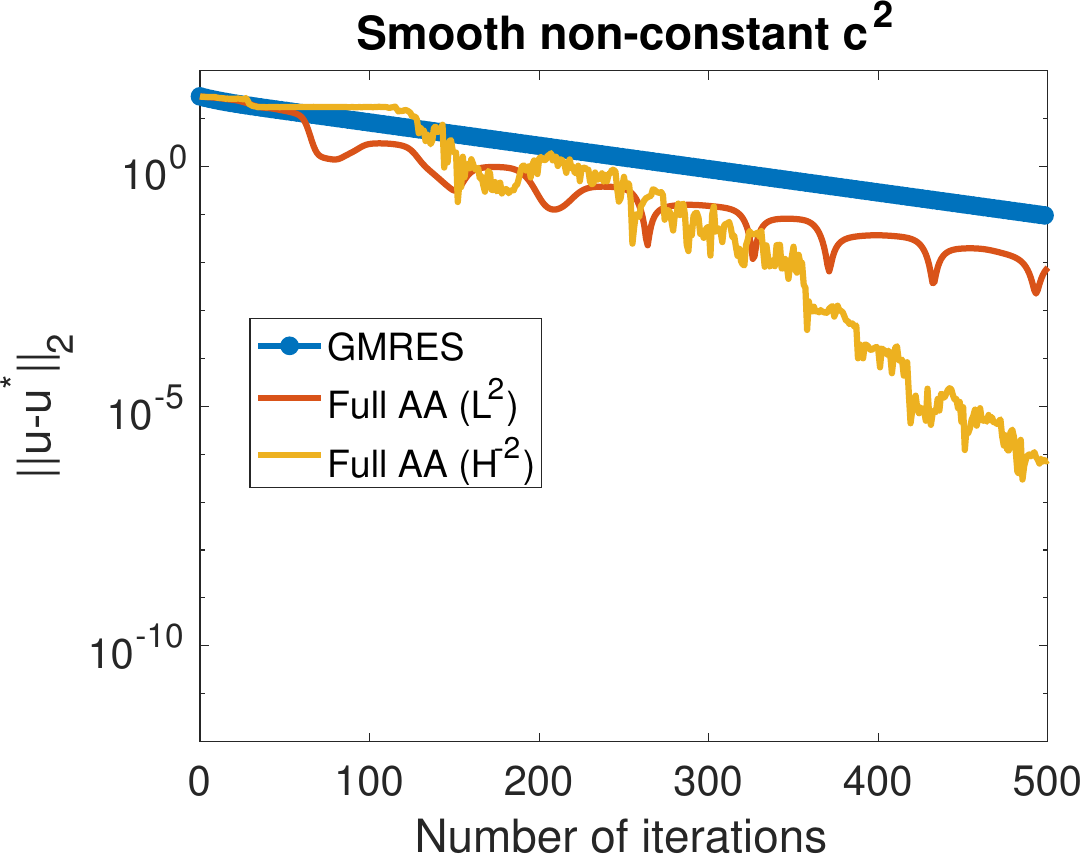}
\includegraphics[width=0.32\textwidth]{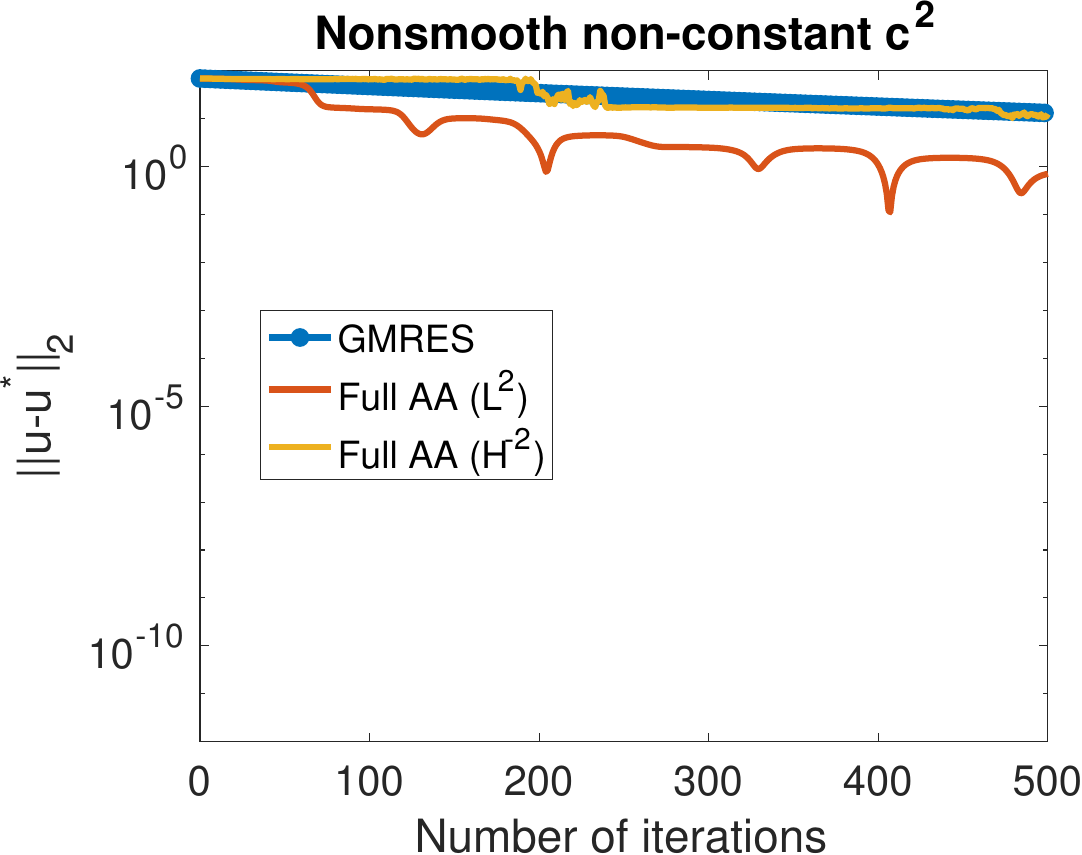}
\smallskip
\smallskip
\includegraphics[width=0.32\textwidth]{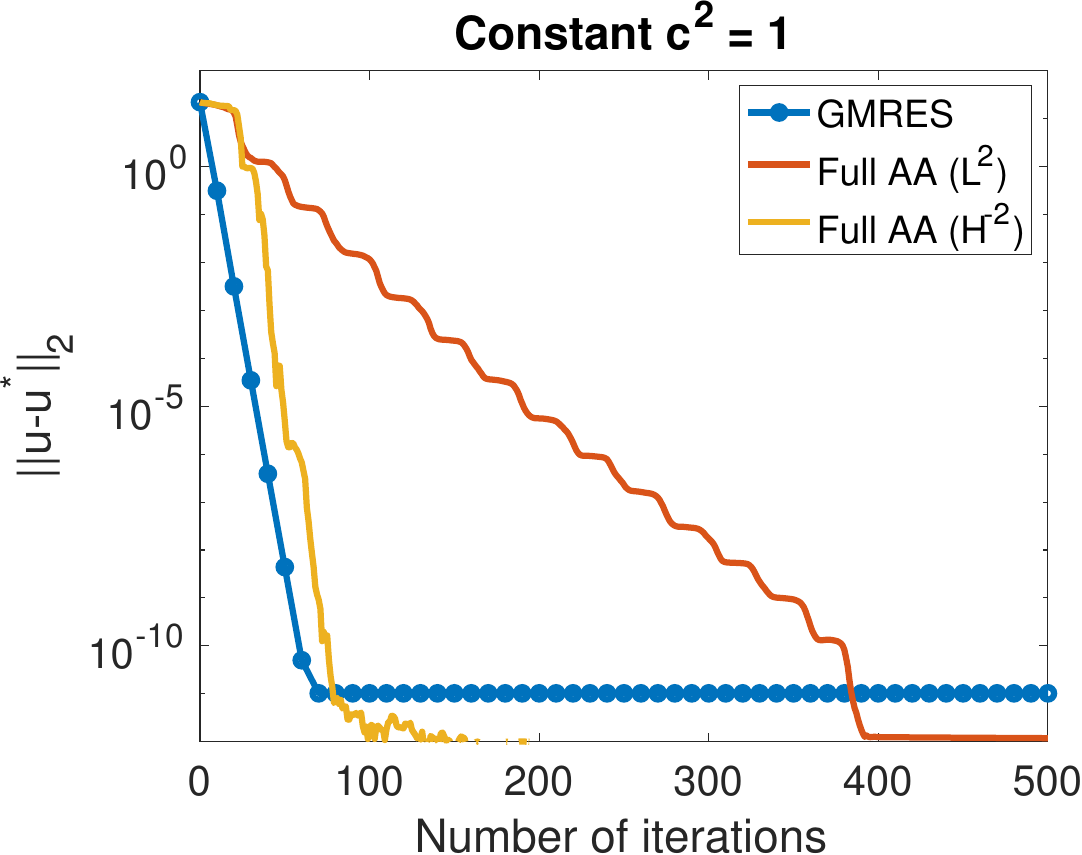}
\includegraphics[width=0.32\textwidth]{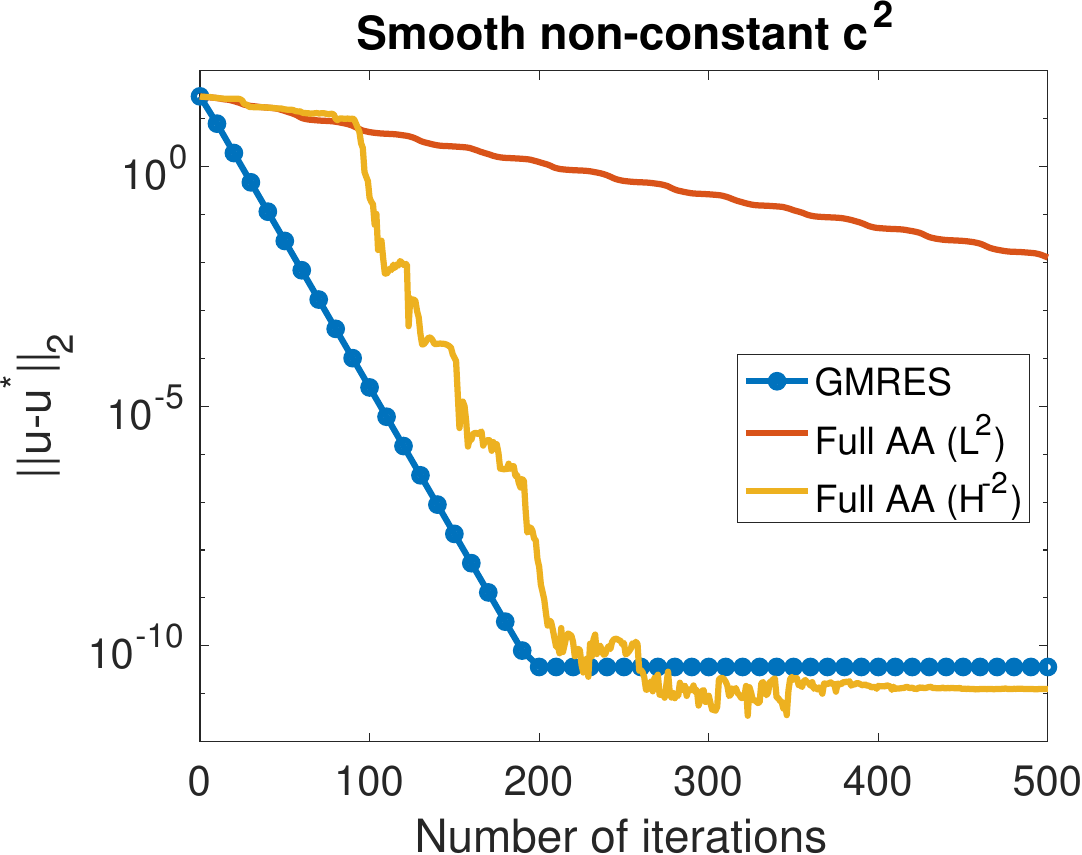}
\includegraphics[width=0.32\textwidth]{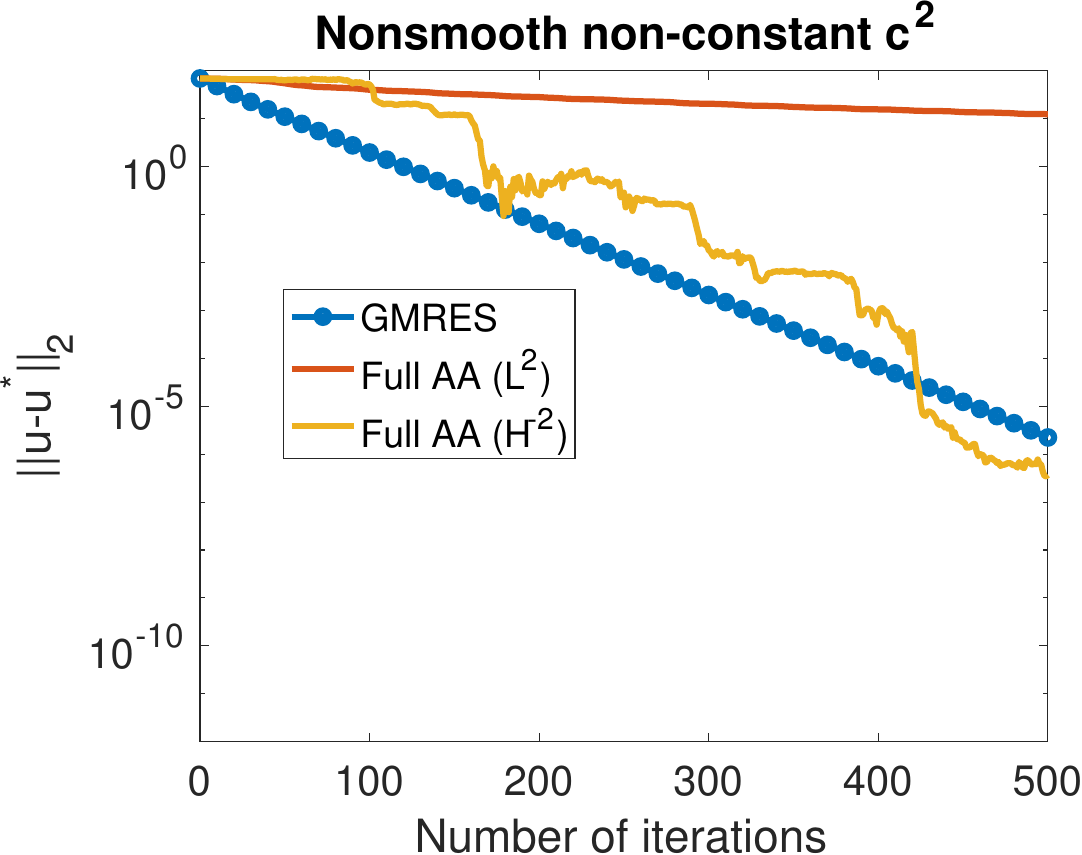}
\smallskip
\smallskip
\caption{Comparison of AA in the $L^2$ and $\mathcal{H}^{-2}$ norm with GMRES. Top row ($m = 3$) and bottom row ($m = 10$). Left column, wave speed $c_{\rm a}$, middle column, wave speed $c_{\rm b}$, and bottom column, wave speed $c_{\rm c}(x)$.
\label{fig:whi1}}
\end{center}
\end{figure}

To compare AA in the $L^2$ norm with AA in the $\mathcal H^{-2}$ norm, we consider three different wave speeds 
\begin{equation} 
c_{\rm a} = 1, \qquad  c_{\rm b} = 1 - 0.55 e^{-144(x-0.5)^2}, \qquad 
c_{\rm c} = \left\{ \begin{array}{cc}
1 & |x-0.5| >  0.125, \\
 0.3 & |x-0.5| < 0.125.
\end{array} \right.
\label{eq:cvalues} 
\end{equation} 
We supply AA with the fixed-point operator associated to $Au = b$ obtained by performing Richardson iteration.  We also compare with restarted GMRES, which restarts every $m$ iterations (see Fig.~\ref{fig:whi1}). 
We find that AA in the $\mathcal H^{-2}$ norm outperforms GMRES as well as AA in the $L^{2}$ norm for all cases except when the wave speed is $c_{\rm c}$ and $m=3$. In this exceptional case, none of the methods manage to decrease the error substantially within 500 iterations.


\begin{figure}
\begin{center}
\includegraphics[width=0.45\textwidth]{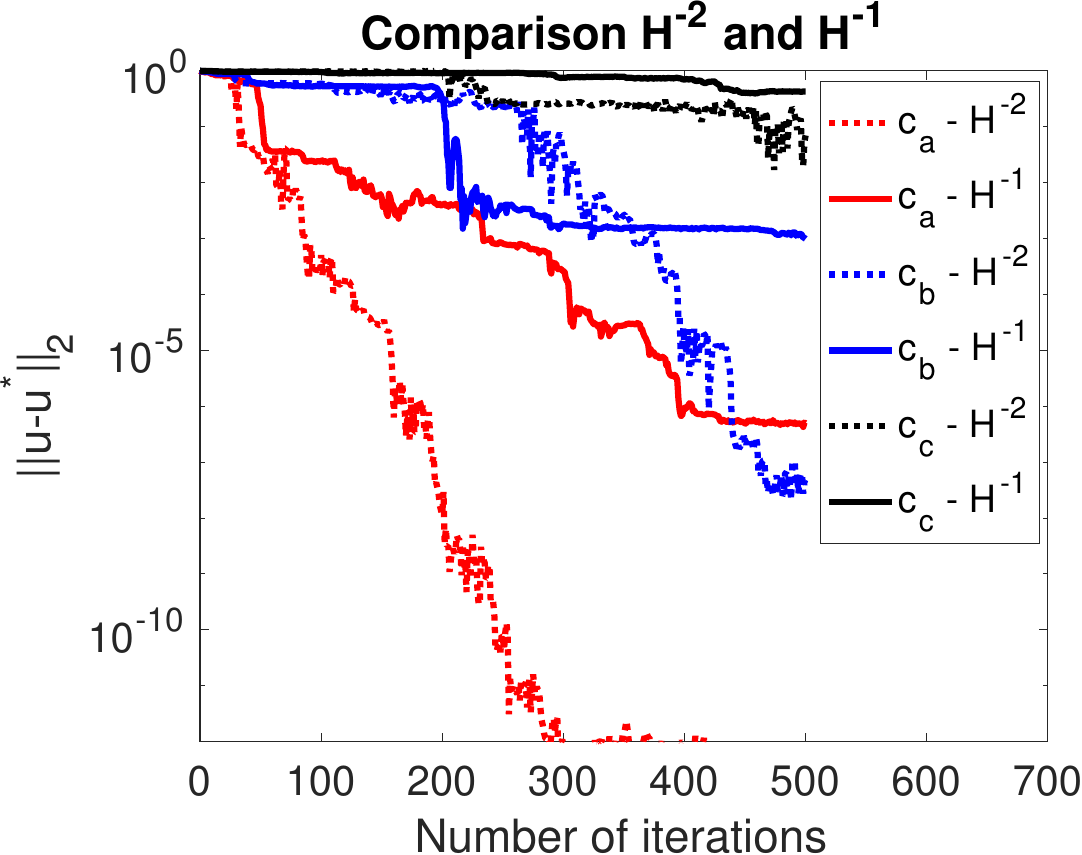}
\includegraphics[width=0.45\textwidth]{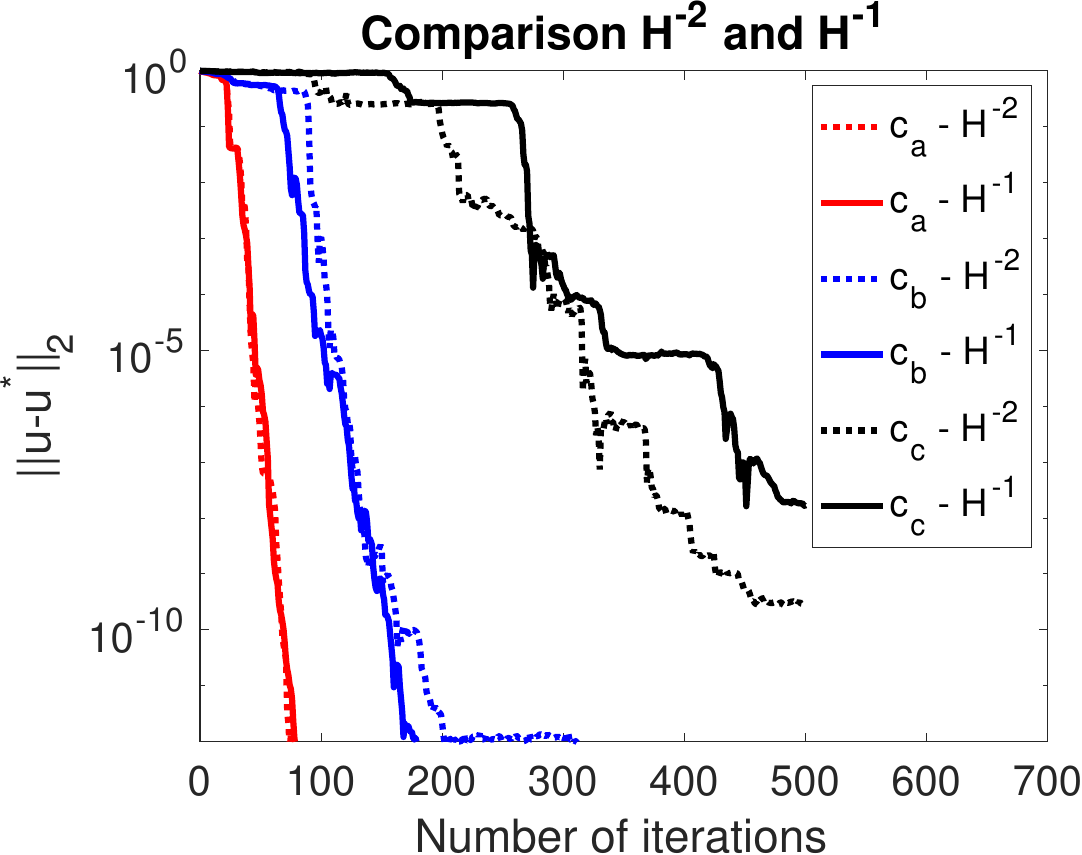}\\
\smallskip
\smallskip
\smallskip
\caption{Comparison of AA in the $\mathcal H^{-1}$ norm and the $\mathcal H^{-2}$ norm for $m = 3$ (left) and $m = 10$ (right).\label{fig:whi1D_H1_H2}}
\end{center}
\end{figure}

The theory for the WaveHoltz iteration predicts that components of the solution that correspond to eigenpairs with eigenvalues closest to the Helmholtz frequency, $\omega$, have the slowest converge rate. When the numerical solution has a small discretization error, these modes are typically well-resolved, and the spectral biasing due to the $\mathcal H^{-2}$ norm appears to improve the convergence of AA. To compare the $\mathcal{H}^{-1}$ and $\mathcal{H}^{-2}$ norm more closely in AA, we compare the acceleration methods for $m=3$ and $m=10$ with the three different wave speeds (see Fig.~\ref{fig:whi1D_H1_H2}). For small $m$, AA in the $\mathcal H^{-2}$ norm converges faster than in the $\mathcal H^{-1}$ norm.

\subsubsection{WaveHoltz iteration in two dimensions}
Next we consider an example in two dimensions with a wave speed given by 
\[
c^2(x,y) = \begin{cases} 
0.3, & 0.4 \le x \le 0.6 \text{ or } 0.4\le  y \le 0.6, \\
1, & \text{otherwise},
 \end{cases} 
\]
on $\Omega  = [0,1]^2$. We, again, use second-order finite difference discretization with an $65\times 65$ equispaced grid. We set the angular frequency to be $\omega = 11$. The forcing term is zero everywhere except at the gridpoint closest to $(0.25,0.75)$ where it has an amplitude adjusted so that the solution is around 1 in magnitude. 

\begin{figure} 
\begin{center}
\subfloat[$m=30$]{\includegraphics[width=0.33\textwidth]{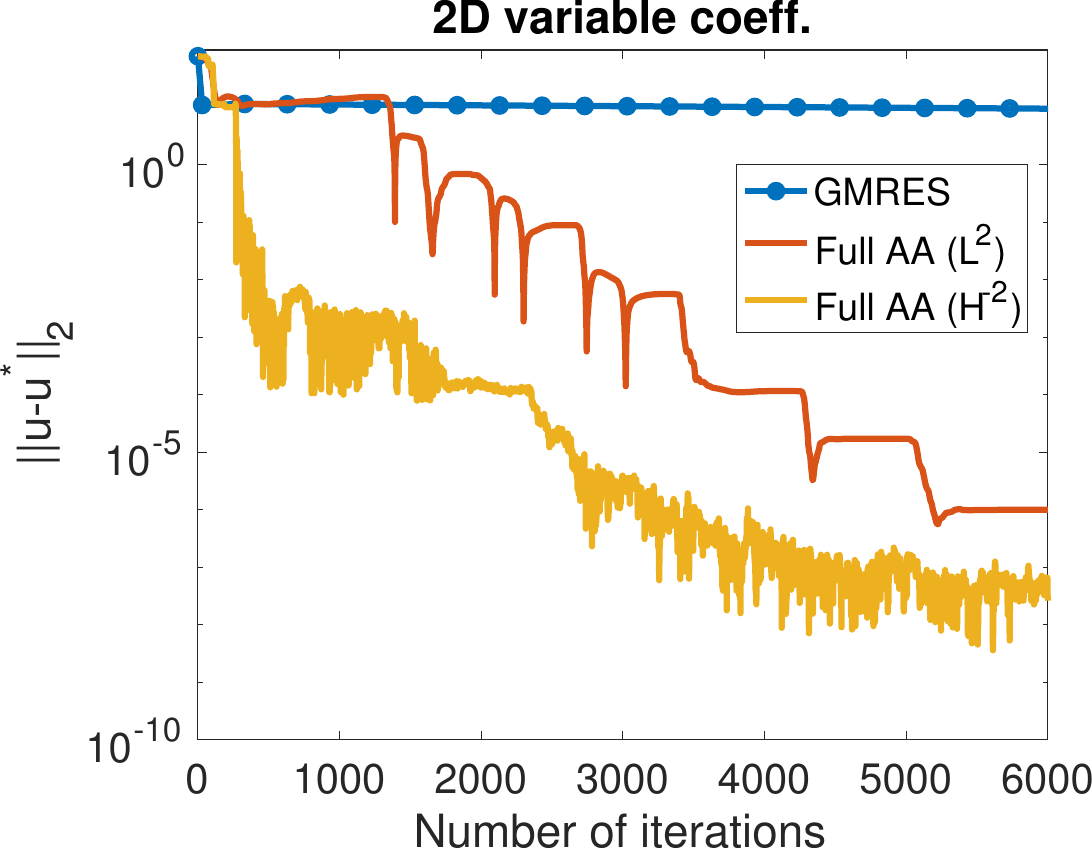}}
\subfloat[$m=50$]{\includegraphics[width=0.33\textwidth]{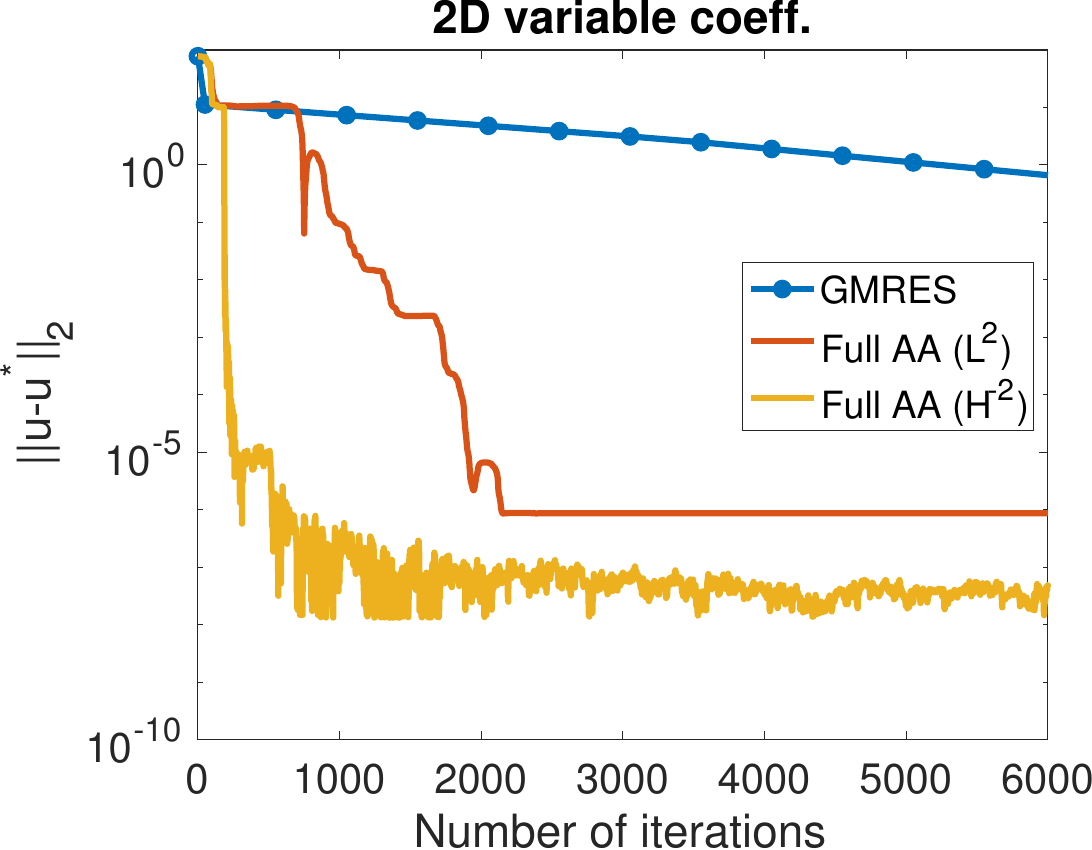}}
\subfloat[$m=100$]{\includegraphics[width=0.33\textwidth]{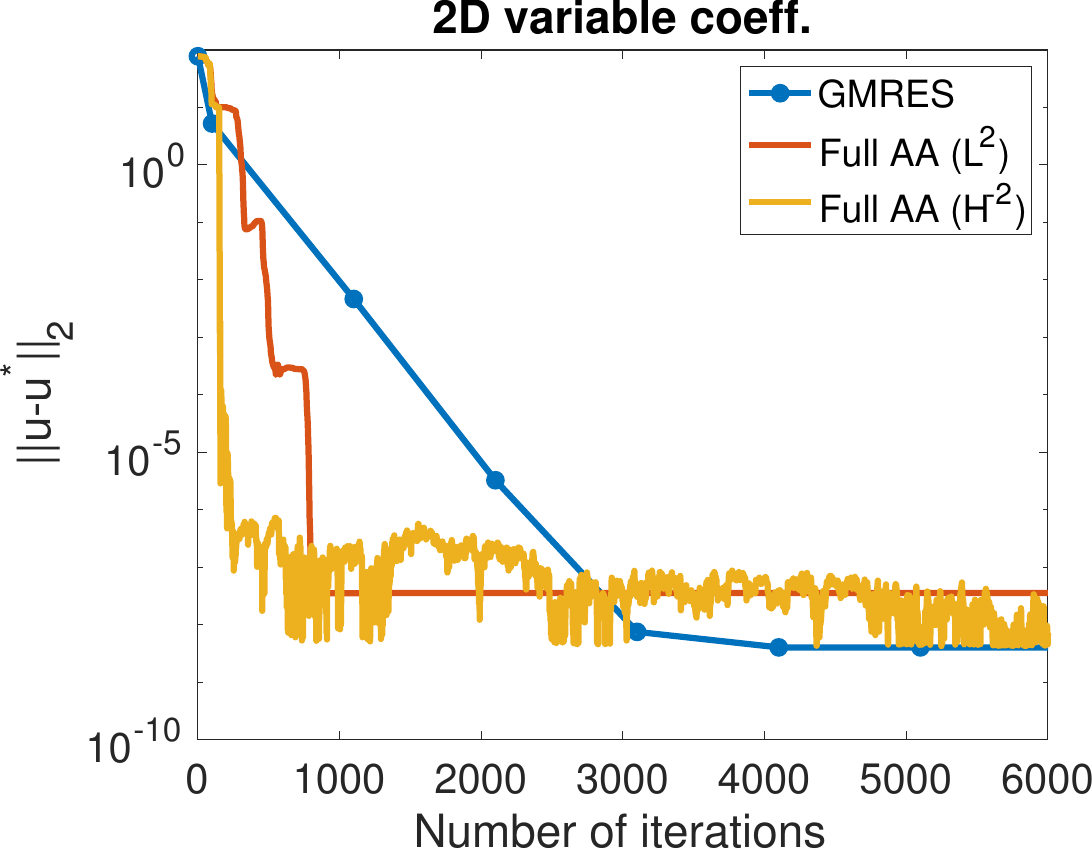}}
\caption{Convergence histories for the two-dimensional Helmholtz example with (a) $m=30$, (b) $m=50$ and (c) $m=100$.\label{fig:whi2D}}
\end{center}
\end{figure}

We employ Richardson iteration to generate a fixed-point operator and then use AA in the $L^2$ and $\mathcal{H}^{-2}$ norm. Note that the degrees of freedom of the solution in this example are ordered as a long vector with lexicographical ordering and that we employ the one dimensional $\mathcal{H}^{-2}$ norm to the two-dimensional data. We compare against restarted GMRES with restarts every $m$th iteration for $m = 30,$ $m= 50$ and $m = 100$ (see Fig.~\ref{fig:whi2D}). We observe that AA in the $\mathcal H^{-2}$ norm outperforms restarted GMRES as well as AA in the $L^{2}$ norm. In this 2D setting, AA in the $L^{2}$ norm is also performing better than GMRES. Not surprisingly, the advantage of the ``sliding memory'' of AA is reduced as the restart depth increases. The solution to the problem is displayed in Fig.~\ref{fig:whi2DSol}.

\begin{figure} 
\begin{center}
\includegraphics[width=0.5\textwidth]{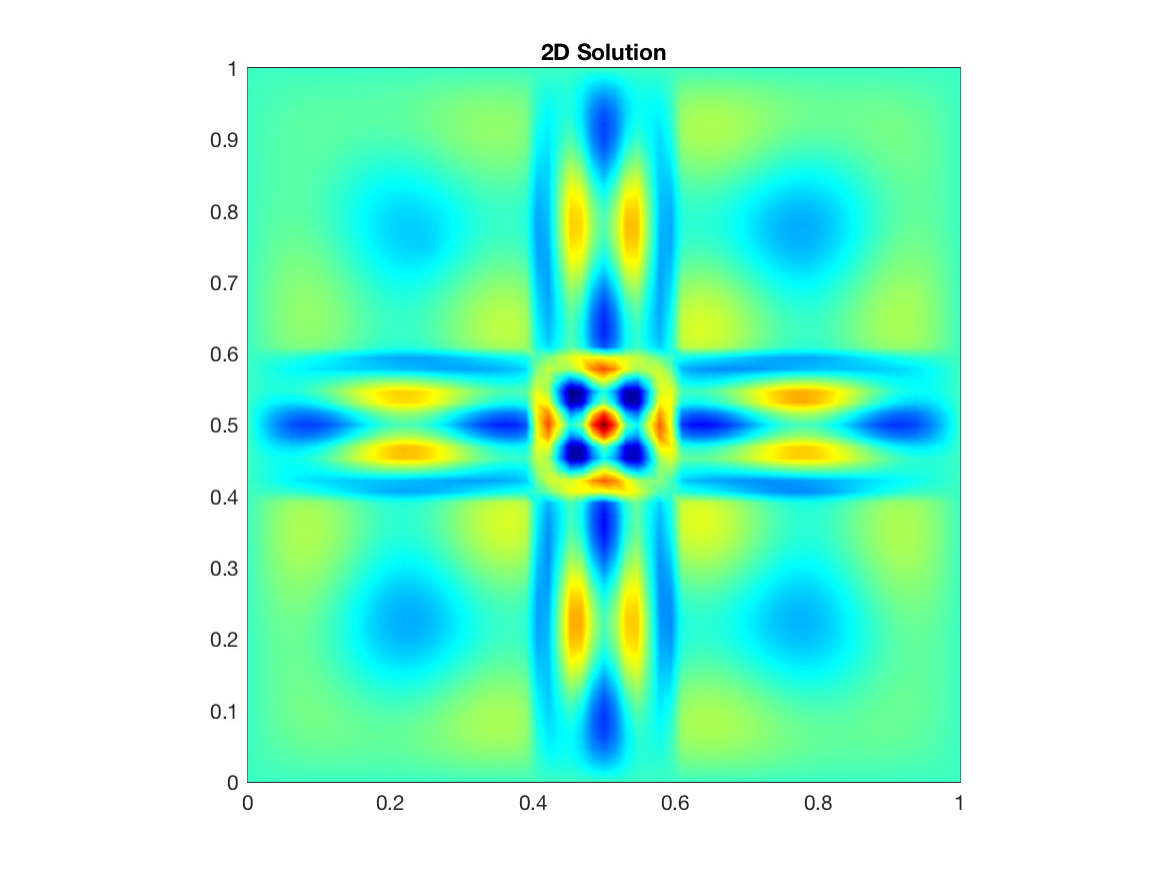}
\caption{The solution for the Helmholtz equation in two dimensions.\label{fig:whi2DSol}}
\end{center}
\end{figure}


This experiment is encouraging, as the benefits of the $\mathcal{H}^{-2}$ norm persist. Even though it might be possible to use a two-dimensional definition of the $\mathcal H^{-2}$ for this simple geometry, its computation can be more costly and, in the case of complex geometry, quite cumbersome to compute.  

\section{Conclusion}\label{sec:Conclusions}
In this paper, we propose the idea of using Anderson acceleration based on the $\mathcal H^{-s}$ Sobolev norm. We observe that this can counterbalance the implicit spectral biasing in certain fixed-point operators. We rigorously analyze the convergence behavior of one-step AA, providing an explicit error bound using Chebyshev polynomials that decreases exponentially in the memory parameter $m$. Numerical experiments for both contractive, noncontractive, and non-linear operators demonstrate the acceleration effects of AA based on different norms. In practice, the choice of distance function in AA should be selected depending on the spectral biasing of the fixed-point operator. 

\section*{Acknowledgments}
This material is based upon work supported by the National Science Foundation under Grant No. DMS-1818757, DMS-1913076 and DMS-1913129.


\end{document}